\documentclass[11pt,a4paper]{article}
\usepackage[T1]{fontenc}
\usepackage{textcomp}
\usepackage[a4paper]{geometry}
\usepackage{amssymb,latexsym,amsmath,amsfonts,amsthm}
\usepackage{graphicx}
\usepackage{epstopdf}
\usepackage{algorithm}
\usepackage{algorithmic}
\usepackage{dsfont}
\usepackage{mathrsfs}
\usepackage{epsfig}
\usepackage{booktabs}
\usepackage{mathtools}
\usepackage{comment,verbatim}
\usepackage{subfigure}
\usepackage{overpic}
\usepackage{hyperref}
\usepackage{bbm}
\usepackage[normalem]{ulem}
\usepackage{cite}
\usepackage{xcolor}

\newtheorem{theorem}{Theorem}[section]
\newtheorem{definition}[theorem]{Definition}
\newtheorem{lemma}[theorem]{Lemma}

\makeatletter
\newtheoremstyle{plainnsl}
{8pt}
{8pt}
{\upshape}
{}
{\bfseries}
{.}
{ }
{}
\makeatother

\theoremstyle{plainnsl}
\newtheorem{example}[theorem]{Example}
\newtheorem{remark}[theorem]{Remark}

\makeatletter
\@addtoreset{equation}{section}
\makeatother

\graphicspath{ {images/} }

\usepackage{color}

\begin{document}
	
	\title{On exponential convergence of Chebyshev polynomial approximation for multivariate analytic functions}
	
	\author{Xinyu Wang\footnotemark[1] ~ and ~ Haiyong Wang\footnotemark[1]~\footnotemark[2]}
	
	\renewcommand{\thefootnote}{\fnsymbol{footnote}}
	
	\footnotetext[1]{School of Mathematics and Statistics, Huazhong University of Science and Technology, Wuhan 430074, P. R. China. \texttt{Email:haiyongwang@hust.edu.cn}}
	
	\footnotetext[2]{Hubei Key Laboratory of Engineering Modeling and Scientific Computing, Huazhong University of Science and Technology, Wuhan 430074, P. R. China.}
	
	\maketitle
	
	\begin{abstract}
		This paper presents a new analysis of the Chebyshev projection for multivariate analytic functions, drawing on pluripotential theory. It is proved that in any downward closed convex polynomial space, the Chebyshev projection achieves the same exponential convergence rate as the best polynomial approximation.
		This result enables a precise quantification of the exponential convergence rate of the Chebyshev projection.
		The analysis is then extended to several related topics, including tensorized Chebyshev interpolation, tensor product Gauss--Legendre quadrature, Padua interpolation and cubature, and Chebyshev-Galerkin method, with the corresponding exponential convergence rate established in each case. Supporting numerical experiments are provided to validate the theoretical results.
	\end{abstract}
	
	\noindent {\bf Keywords:} Chebyshev polynomial approximation,  multivariate analytic functions, exponential convergence, pluripotential theory, Bernstein--Walsh theorem
	
	\vspace{0.05in}
	
	\noindent {\bf AMS classifications:} 41A10, 41A63, 41A25

	\section{Introduction}
	Chebyshev polynomial approximations play a crucial role in many areas of scientific computing, including spectral methods for solving PDEs \cite{boyd2001chebyshev,canuto2007spectral,shen2011spectral}, the Chebfun software for numerical computing \cite{driscoll2014chebfun}, option pricing and finance modeling \cite{gass2018chebyshev,glau2020low}, deep neural networks \cite{tang2024chebnet}, etc. Consider a function $f(\mathbf{x})$ defined on the $d$-dimensional hypercube $[-1,1]^d$ for some $d\in\mathbb{N}$, and let $T_k(x) = \cos(k\arccos(x))$ denote the Chebyshev polynomial of the first kind of degree $k$. If $f$ satisfies the Dini--Lipschitz condition \cite[Theorem 4.1]{mason1980near}, then it has a uniformly and absolutely Chebyshev expansion of the form
	\begin{equation}\label{onepone}
		f(\mathbf{x}) = \sum_{\mathbf{k}\in\mathbb{N}_0^d}a_\mathbf{k}T_\mathbf{k}(\mathbf{x}) = \sum_{k_1=0}^{\infty}\cdots\sum_{k_d=0}^{\infty}a_{k_1,\dots,k_d} T_{k_1}(x_1)\cdots T_{k_d}(x_d),
	\end{equation}
	where $\mathbb{N}_0^d$ denotes the set of all $d$-tuples of nonnegative integers. Truncating this expansion yields the Chebyshev projection of $f$:
	\begin{equation}
		S_{\Lambda}(f)(\mathbf{x}) = \sum_{\mathbf{k}\in \Lambda}a_{\mathbf{k}}T_{\mathbf{k}}(\mathbf{x}), \notag
	\end{equation}
	where $\Lambda \subset \mathbb{N}_0^d$ is a finite multi-index set chosen suitably. In the univariate case with $\Lambda = \{0,\dots,n\}$, the exponential convergence of the Chebyshev projection for analytic functions was first established by Bernstein in 1912: if $f$ is analytic in a neighbourhood of $\overline{E(\rho)}$, where $E(\rho)$ denotes the open region bounded by the {\it Bernstein ellipse}
	\begin{equation}\label{oneptwo}
		\partial E(\rho) := \left\{
		z\in\mathbb{C}: z=\frac{u+u^{-1}}{2},\ |u|=\rho
		\right\}, \quad \rho > 1,
		\notag
	\end{equation}
	then
	$\max_{x \in [-1,1]} \left|f(x)-S_{\Lambda}(f)(x)\right| = \mathcal{O}(\rho^{-n})$ \cite[pp.~94-95]{bernstein1912ordre}.
	The parameter $\rho$ can be explicitly determined from the locations of the singularities of $f$. In the multivariate case, however, only a few discussions have been devoted to analyzing the exponential convergence of Chebyshev projection. Key contributions can be summarized as follows:
	\begin{itemize}
		\item[$\bullet$] Bochner and Martin in \cite[Chapter V]{bochner1948several} established a multivariate analogue of Bernstein's result, showing that if $f$ is analytic in a neighbourhood of $\overline{E(\rho_1)}\times \cdots \times \overline{E(\rho_d)}$, then its Chebyshev coefficients satisfy $a_{\mathbf{k}} = \mathcal{O}(\rho_1^{-k_1} \cdots \rho_d^{-k_d})$ for $\mathbf{k} = (k_1, \ldots, k_d) \in \mathbb{N}_0^d$. Although the exponential convergence behavior of Chebyshev projection can be easily seen, the quantification of the exponential convergence rate remains open.
		
		\item[$\bullet$]
		More recently, based on the observation that multivariate polynomials of fixed degree have anisotropic resolution power in the hypercube, Trefethen in \cite{trefethen2017cubature,trefethen2017multivariate} introduced the {\it Euclidean degree} for multivariate polynomials, and quantified explicitly the exponential convergence rates of Chebyshev projections with total, Euclidean and maximal degrees. Although the exponential convergence rates for Euclidean and maximal degrees are sharp for Runge functions, e.g., $f(\mathbf{x})=(x_1^2+ \cdots + x_d^2+h^2)^{-1}$ with $h > 0$, the sharpness for other types of analytic functions is still unclear (see \cite{bos2018bernstein,hecht2026multivariate}). Moreover, we will show that even for Runge functions, the exponential convergence rate for total degree is suboptimal (see Remark \ref{rk:Trefethen}).	
\end{itemize}

The aim of this paper is to analyze the exponential convergence rate of Chebyshev approximation for multivariate analytic functions from the perspective of pluripotential theory. In recent years, pluripotential theory associated to convex bodies has seen considerable development, particularly in connection with multivariate polynomial approximation (see, e.g.,
\cite{bos2018bernstein,magnusson2024siciak,magnusson2025bernstein}). Based on the analyticity assumption determined by the $P$-extremal function of the hypercube, we derive a new upper bound estimate for Chebyshev coefficients and show that the exponential convergence of the Chebyshev projection using a downward closed convex polynomial space, which includes commonly used index sets induced by the 1-, 2- and $\infty$-norms as special cases, can be quantified by the {\it maximal convergence number}. Our result implies that the Chebyshev projection using a downward closed convex polynomial space achieves the best possible exponential rate, since the maximal convergence number actually characterizes the exponential rate of best polynomial approximation for multivariate analytic functions.
We further extend our analysis to several related topics, including tensorized Chebyshev interpolation, tensor product Gauss--Legendre quadrature, Padua interpolation and cubature, and Chebyshev-Galerkin method, and establish their exponential convergence rates for multivariate analytic functions as well. Compared with existing convergence results for tensorized Chebyshev interpolation, our new result is sharper (see Remark \ref{contrast}). For Padua interpolation and cubature, to the best of our knowledge, their exponential convergence rates are established for the first time.

The paper is organized as follows. In section \ref{section_bernsteintheory}, we review some basic results on polynomial approximation for multivariate analytic functions. In section \ref{section_chebprojection}, we analyze the exponential convergence rate of Chebyshev projection, with supporting numerical experiments provided in section \ref{section_numerical}. Finally, section \ref{section_application} applies our analysis results to the related topics, including tensorized Chebyshev interpolation, tensor product Gauss--Legendre quadrature, Padua interpolation and cubature, and Chebyshev-Galerkin method.

\section{Bernstein--Walsh theory associated to convex bodies}\label{section_bernsteintheory}
In this section, we briefly introduce the Bernstein--Walsh theory associated to convex bodies. We refer to \cite{bos2018bernstein,klimek1991pluripotential,magnusson2025bernstein} for more details. Throughout the paper, we use the standard multi-index notation, such as $\mathbf{z}^\mathbf{k}=\prod_{i=1}^d z_i^{k_i}$ for $\mathbf{z}=(z_1,\ldots,z_d)$ and $\mathbf{k}=(k_1,\ldots,k_d)$, and $\boldsymbol{\alpha} \le \boldsymbol{\beta}$ for $\boldsymbol{\alpha}=(\alpha_1,\ldots,\alpha_d)$ and $\boldsymbol{\beta}=(\beta_1,\ldots,\beta_d)$ if $\alpha_i \le \beta_i$, $1\le i\le d$. We also use $\Vert \cdot \Vert_{\ell^q}$, $1 \le q \le \infty$, to denote the standard $q$-norm of vectors in $\mathbb{R}^d$, and $\Vert \cdot \Vert_{\infty}$ to denote the maximum norm over $[-1,1]^d$.

\subsection{The $\boldsymbol{P}$-extremal functions and Bernstein--Walsh theorem}\label{section2.1}
We begin with a review of the classical extremal functions. Let $\mbox{PSH}(\mathbb{C}^d)$ denote the set of plurisubharmonic functions on $\mathbb{C}^d$, and let $L(\mathbb{C}^d)$ denote the usual Lelong class defined by
\begin{equation}
	L(\mathbb{C}^d) := \left\{ u\in \mbox{PSH}(\mathbb{C}^d) : u(\mathbf{z}) -  \max_{1\le i\le d}\{\log|z_i|\} = \mathcal{O}(1) \textnormal{ as } |\mathbf{z}| \rightarrow \infty \right\}, \notag
\end{equation}
where $|\mathbf{z}|$ is the modulus of $\mathbf{z} = (z_1, \dots, z_d) \in \mathbb{C}^d$. For a compact set $K\subset\mathbb{C}^d$, the {\it extremal function} (or the {\it pluricomplex Green function}) of $K$ is given by
\begin{equation}
	V_{K}(\mathbf{z}) := \sup \{ u(\mathbf{z}):u\in L(\mathbb{C}^d), \, u \le 0 \textnormal{ on } K\}. \notag
\end{equation}
Denote by $V_K^{*}(\mathbf{z}):=\limsup_{\boldsymbol{\zeta}\rightarrow{\mathbf{z}}}V_K(\boldsymbol{\zeta})$ the {\it upper semicontinuous regularization of $V_K$}. A compact set $K$ is said to be {\it $L$-regular} if $V_{K}$ is continuous on $K$. Since $V_K$ is automatically lower semicontinuous, this condition implies that $V_K^* = V_K$. It is known that the extremal function can be characterized using the subclass of $L(\mathbb{C}^d)$ arising from polynomials. Specifically,
\begin{equation}
	V_{K}(\mathbf{z}) = \max \left\{ 0, \, \sup \left\{ \frac{\log|p(\mathbf{z})|}{\textnormal{deg}\,p}:\|p\|_{K}:=\sup_{\mathbf{z}\in K}|p(\mathbf{z})| \le 1, \, \textnormal{deg}\,p \ge 1 \right\} \right\}, \notag
\end{equation}
where $p(\mathbf{z})$ is a complex polynomial with total degree $\textnormal{deg}\,p$.

We now introduce the $P$-extremal function as a generalization of the classical extremal function. Let $\mathbb{R}^+=[0,\infty)$ and let $\Sigma=\{ \mathbf{x} = (x_1,\dots,x_d) \in(\mathbb{R}^+)^d: x_1 + \cdots +x_d \le 1 \}$ be the unit simplex. Consider a convex body (i.e., a compact convex set with nonempty interior) $P\subset(\mathbb{R}^+)^d$ with the property that $\Sigma\subset\alpha P$ for some $\alpha>0$. We define the {\it logarithmic indicator function} of $P$ on $\mathbb{C}^d$ by
\begin{equation}
	H_{P}(\mathbf{z}) := \sup_{J\in P\,} \log |\mathbf{z}^{J}| =  \sup_{(j_1,\dots,j_d)\in P} \left(j_1\log|z_1|+\cdots+j_d\log|z_d|\right), \notag
\end{equation}
and introduce the generalization of Lelong class as
\begin{equation}\label{Lp}
	L_P(\mathbb{C}^d) := \left\{ u\in \mbox{PSH}(\mathbb{C}^d) : u(\mathbf{z})-H_P(\mathbf{z}) = \mathcal{O}(1) \textnormal{ as } |\mathbf{z}|\!\rightarrow\!\infty \right\}. \notag
\end{equation}
The $P$-{\it extremal function} of a compact set $K \subset \mathbb{C}^d$ is then defined as
\begin{equation}
	V_{P,K}(\mathbf{z}) := \sup  \{ u(\mathbf{z}):u\in L_P(\mathbb{C}^d), \, u \le 0 \textnormal{ on } K\}. \notag
\end{equation}
Let $V_{P,K}^*(\mathbf{z}) = \limsup_{\boldsymbol{\zeta}\rightarrow{\mathbf{z}}} V_{P,K}(\boldsymbol{\zeta})$. As in the classical case, we say that a compact set $K$ is {\it $PL$-regular} if $V_{P,K}$ is continuous on $K$, i.e., $V_{P,K}^{*}=V_{P,K}$. Note that if $K$ is $L$-regular, then it is $PL$-regular for any convex body $P$ as above.
Furthermore, when $P=\Sigma$, the generalized Lelong class reduces to the usual Lelong class, i.e.,  $L_{\Sigma}(\mathbb{C}^d)=L(\mathbb{C}^d)$, and the $P$-extremal function recovers the classical extremal function, i.e., $V_{\Sigma,K} = V_{K}$.

Define the finite-dimensional multivariate polynomial space
\begin{equation}
	\mbox{Poly} (nP) := \left\{ p(\mathbf{z})=\sum _{\mathbf{k}\in nP \cap \mathbb{N}_0^d} c_{\mathbf{k}} \mathbf{z}^{ \mathbf{k}},\, c_{\mathbf{k}}\in \mathbb{C} \right\} , \quad n \in \mathbb{N}, \notag
\end{equation}
and
\begin{equation}
	D_n(f,P,K) := \inf \left\{
	\Vert f-p \Vert_{K}
	:p \in \textnormal{Poly}(nP) \right\}. \notag
\end{equation}
Note that $\mbox{Poly} (nP)$ is the space of polynomials of total degree at most $n$ when $P=\Sigma$. The Bernstein--Walsh theorem associated to convex body $P$ is stated as follows \cite[Theorem 3.1]{bos2018bernstein}.
\begin{theorem}\label{bw2}
	Let $K \subset \mathbb{C}^d$ be compact and $PL$-regular, and let $f(\mathbf{z})$ be continuous on $K$. Let $R>1$ and define
	\begin{equation}
		\Omega(P,K,R)=\{ \mathbf{z} \in \mathbb{C}^d :V_{P,K}(\mathbf{z}) < \log R \}. \notag
	\end{equation}
	Then
	\begin{equation}
		\limsup_{n \to \infty} {D_n(f,P,K)}^{1/n} \le R^{-1} \notag
	\end{equation}
	if and only if $f(\mathbf{z})$ is the restriction of a function analytic in $\Omega(P,K,R)$ to $K$.
\end{theorem}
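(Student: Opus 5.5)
The Bernstein--Walsh theorem associated to a convex body, Theorem~\ref{bw2}, has two directions. I would prove them separately, following the classical argument for $P=\Sigma$ and replacing total degree by membership in $nP$ throughout. The one structural input is the Bernstein--Walsh inequality for $\textnormal{Poly}(nP)$: for every $p\in\textnormal{Poly}(nP)$,
\[
|p(\mathbf{z})|\le \|p\|_K\, e^{nV_{P,K}(\mathbf{z})}, \qquad \mathbf{z}\in\mathbb{C}^d .
\]
To get it, assume $\|p\|_K\le 1$ and observe that $\frac1n\log|p|$ is plurisubharmonic. Its growth is at most $\frac1n\sup_{\mathbf{k}\in nP}\log|\mathbf{z}^{\mathbf{k}}|+\mathcal{O}(1)=H_P(\mathbf{z})+\mathcal{O}(1)$, so it lies in $L_P(\mathbb{C}^d)$ and is $\le0$ on $K$. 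Hence it is bounded by $V_{P,K}$ by definition.

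\emph{Necessity.} Suppose $\limsup_n D_n^{1/n}\le R^{-1}$. I would pick best approximants $p_n\in\textnormal{Poly}(nP)$ and write $f=p_1+\sum_n(p_{n+1}-p_n)$ on $K$. Since $nP\subset(n+1)P$ (because $0\in P$, which follows from convexity of $P$ and $\Sigma\subset\alpha P$), the difference $q_n=p_{n+1}-p_n$ lies in $\textnormal{Poly}((n+1)P)$. It satisfies $\|q_n\|_K\le 2D_n\le (R^{-1}+\epsilon)^n$ for large $n$. Applying the inequality above on a compact subset of $\{V_{P,K}\le \log r\}$ with $r<R$ gives $|q_n|\le C(R^{-1}+\epsilon)^n r^{n+1}$. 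This decays geometrically once $\epsilon$ is small. Continuity of $V_{P,K}$ (from $PL$-regularity) makes $\{V_{P,K}<\log r\}$ open, and these sets exhaust $\Omega(P,K,R)$. So the series converges locally uniformly there to an analytic extension of $f$.

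\emph{Sufficiency.} This is the main obstacle, because one cannot expand in a Cauchy kernel as in one variable. My plan is to reduce to the classical theorem by comparing $V_{P,K}$ with the standard extremal function. Since $\Sigma\subset\alpha P$, we get $H_\Sigma\le \alpha H_P$, hence $\frac1\alpha V_{K}\le V_{P,K}$. That alone does not give the sharp rate, so instead I would follow \cite{bos2018bernstein} and use the weighted/Siciak approach. First, use $PL$-regularity to show that $V_{P,K}$ coincides with the polynomial envelope
\[
\sup\Bigl\{\tfrac1n\log|p|:\ p\in\textnormal{Poly}(nP),\ \|p\|_K\le1\Bigr\}.
\]
This is a Siciak-type theorem for $L_P$, which I would assume is available from the literature cited, \cite{bos2018bernstein}. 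Then, for $r<R$, the set $\overline{\Omega(P,K,r)}$ is a compact, polynomially convex subset of the domain of analyticity of $f$. By the Oka--Weil theorem, $f$ is approximable there by polynomials, and in fact it suffices to construct an integral representation. Concretely, I would cover the sublevel set by finitely many $P$-polynomial polyhedra $\{|p_j|\le r^{n_j}\}$ with $p_j\in\textnormal{Poly}(n_jP)$ and $\|p_j\|_K\le1$. I would then apply the Weil integral formula on such a polyhedron. Expanding the Weil kernel in a geometric series of products of the $p_j$ yields polynomial approximants. Each product of $p_j$'s of total weight $m$ lies in $\textnormal{Poly}(mP)$, because the Minkowski sum satisfies $aP+bP=(a+b)P$ for a convex set $P$. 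The approximants have errors on $K$ of order $r^{-n}$. Letting $r\uparrow R$ gives $\limsup_n D_n^{1/n}\le R^{-1}$. The delicate point is bookkeeping the degrees of the Weil (Hefer) numerator polynomials so that they stay in $\textnormal{Poly}(\mathcal{O}(1)P)$ and contribute only subexponential factors.
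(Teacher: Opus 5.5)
The paper gives no proof of Theorem~\ref{bw2}. It is quoted from \cite[Theorem 3.1]{bos2018bernstein}, so what you have written is a reconstruction of the argument behind that citation: Siciak's classical proof carried over to $\textnormal{Poly}(nP)$. It holds up.

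Your first direction is complete. The $P$-Bernstein--Walsh inequality, $nP\subset(n+1)P$, and uniform convergence of the telescoping series on each set $\{V_{P,K}\le\log r\}$, $r<R$, are all that is needed.

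In the second direction you correctly isolate the one deep input, the envelope identity $V_{P,K}=\sup\{\frac1n\log|p|:\ p\in\textnormal{Poly}(nP),\ \|p\|_K\le 1\}$. That identity is also what the cited literature rests on. Your reconstruction adds something the bare citation does not show: beyond this identity, only convexity of $P$ (through $nP\subset(n+1)P$ and $aP+bP=(a+b)P$) and $\Sigma\subset\alpha P$ are used. Downward closedness plays no role, consistent with the theorem being stated for general $P$.

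Three points need tightening.

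\emph{First}, you want a single polyhedron squeezed between $K$ and $\Omega(P,K,R)$, not a cover of the sublevel set. Take $r<r_1<r_2<R$. The shell $A=\{\log r_1\le V_{P,K}\le\log r_2\}$ is compact, because $V_{P,K}$ is continuous (this is where $PL$-regularity enters) and $V_{P,K}(\mathbf z)\to\infty$ as $|\mathbf z|\to\infty$. Using the envelope identity, cover $A$ by finitely many sets $\{|p_j|>r^{n_j}\}$ with $p_j\in\textnormal{Poly}(n_jP)$ and $\|p_j\|_K\le 1$. Put $W=\{|p_j|<r^{n_j}\ \forall j\}\cap\{V_{P,K}<\log r_1\}$. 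Since the polyhedron misses $A$, $W$ is a union of its components, $K\subset W$, and $\overline W$ is a compact subset of $\Omega(P,K,R)$.

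\emph{Second}, Weil's formula requires the polyhedron to be in general position, which you do not address. Either secure this by a generic perturbation, or bypass Weil altogether. For the latter, embed $W$ into a polydisc via $\mathbf z\mapsto(\mathbf z,p_1(\mathbf z),\dots,p_N(\mathbf z))$, extend $f$ by Oka--Cartan, and Taylor-expand; this leads to the same bookkeeping.

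\emph{Third}, that bookkeeping is not actually delicate. After integrating in $\boldsymbol\zeta$, the Hefer determinant contributes a polynomial in $\mathbf z$ of total degree at most a fixed $D$. Because $\Sigma\subset\alpha P$, it lies in $\textnormal{Poly}(\lceil\alpha D\rceil P)$, so it only shifts the $P$-degree by a constant. On the faces $|p_j|=r^{n_j}$, and on $K$ we have $|p_j|\le 1$. Hence the terms of weight $s=\sum_k m_kn_{j_k}$ are $O(r^{-s})$ on $K$, and there are $O(s^{d-1})$ of them. This gives $D_n(f,P,K)=O(n^{d-1}r^{-n})$ for every $r<R$.

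Finally, the Oka--Weil remark gives no rate and can be dropped.
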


Bernstein--Walsh theorem gives a qualitative result on the best polynomial approximation for analytic functions. Specifically, it shows that $D_n(f,P,K)$ will decay at the exponential rate $\mathcal{O}(R_{\mbox{\scriptsize max}}^{-n})$, where
\begin{equation}
	R_{\mbox{\scriptsize max}} := \sup \{ R:\Omega(P,K,R)\cap S(f) = \emptyset\} = \inf \{ \exp(V_{P,K}(\mathbf{z})):\mathbf{z}\in S(f) \} \notag
\end{equation}
and $S(f)$ denotes the set of singularities of $f$. The number $R_{\mbox{\scriptsize max}}$ is called the {\it maximal convergence number} in \cite{kraus2011bernstein}, and it quantifies the exponential convergence rate of best polynomial approximation. We refer to \cite{bagby1993bernstein} for the history and proofs of the classical Bernstein--Walsh theorem, and to \cite{bos2018bernstein} for the proof of the Bernstein--Walsh theorem associated to convex bodies.

\subsection{Downward closed convex bodies, nonnegative polars, and the $\boldsymbol{P}$-extremal function of the hypercube}
We begin with the definition of downward closed convex bodies and their nonnegative polars.
\begin{definition}
	A convex body $P \subset (\mathbb{R}^+)^d$ is downward closed if $\boldsymbol{\mu} \le \boldsymbol{\nu} \in P$ implies $\boldsymbol{\mu} \in P$ for all $\boldsymbol{\mu} \in (\mathbb{R}^+)^d$.
	The nonnegative polar of $P$ is defined by
	\begin{equation}
		P^{*} := \left\{ \mathbf{x} \in (\mathbb{R}^{+})^d :\sup _{ \mathbf{y} \in P} \left(\mathbf{x}\cdot \mathbf{y} \right) \le 1  \right\}. \notag
	\end{equation}
\end{definition}

\begin{remark}
	The {\it polar} of a subset $P\subset \mathbb{R}^d$ is defined by $P^{\circ} := \{ \mathbf{x} \in \mathbb{R}^d : \sup _{ \mathbf{y} \in P} (\mathbf{x}\cdot \mathbf{y} ) \le 1 \}$ (see \cite[p.~32]{schneider2014convex}). Clearly, the nonnegative polar $P^*$ defined above is the nonnegative portion of the polar $P^{\circ}$.
\end{remark}

Below we give two examples of downward closed convex bodies and their nonnegative polars, followed by an example of a convex body that is not downward closed.
\begin{itemize}
	\item[(i)] The nonnegative portion of the $\ell^{q}$ unit ball $\mathbb{B}_{q}^{+}=\{ \mathbf{x} \in (\mathbb{R}^+)^d: \Vert \mathbf{x} \Vert_{\ell^q} \le 1 \}$ for $1\le q\le \infty$ is a downward closed convex body. Moreover, the nonnegative polar of $\mathbb{B}_{q}^{+}$ is $\mathbb{B}_{p}^{+}$, where $p$ is the conjugate exponent of $q$, i.e., $1/q + 1/p = 1$.
	\item[(ii)] Let $L = [0,r_1]\times \cdots \times [0,r_d]$ and $T = \big\{ \! \sum_{i=1}^{d} \lambda_i \mathbf{e}_i/r_i: \sum_{i=1}^{d} \lambda_i \le 1, \, \lambda_i \ge 0 \big\}$, where $r_i>0$ for $i=1,\ldots,d$, and $\{\mathbf{e}_i\}_{i=1}^d$ is the standard basis of $\mathbb{R}^d$. Both $L$ and $T$ are downward closed convex bodies, and $L$ is the nonnegative polar of $T$ (and vice versa).
	\item[(iii)] Let $G = \operatorname{conv} \left\{ \mathbf{0}, \mathbf{e}_1, \ldots, \mathbf{e}_d, 2\mathbf{e}_1+2\mathbf{e}_2 \right\}$, where $d \ge 2$ and $\operatorname{conv}$ denotes the convex hull. Then $G$ is a convex body satisfying $\Sigma\subset G$, but it is not downward closed. Indeed, $2\mathbf{e}_1 \le 2\mathbf{e}_1+2\mathbf{e}_2 \in G$, whereas $2\mathbf{e}_1\notin G$.
\end{itemize}

For a downward closed convex body $P \subset (\mathbb{R}^+)^d$, we define its centrally symmetric extension to $\mathbb{R}^d$ by
\begin{equation}
	C(P) := \left\{ \mathbf{x} \in \mathbb{R}^d: \mathbf{x}^+ := (|x_1|, \dots, |x_d|) \in P \right\}. \notag
\end{equation}
The set $C(P)$ is convex. Indeed, if
$\boldsymbol{\mu},\boldsymbol{\nu}\in C(P)$, then by the triangle inequality and the convexity of $P$, we have
\[
\left(\theta\boldsymbol{\mu}+(1-\theta)\boldsymbol{\nu}\right)^+
\le
\theta\boldsymbol{\mu}^+ +(1-\theta)\boldsymbol{\nu}^+ \in P, \quad 0 < \theta < 1,
\]
which, by the downward closed property of $P$, implies
$\theta\boldsymbol{\mu}+(1-\theta)\boldsymbol{\nu}\in C(P)$. The {\it gauge function} (or the {\it Minkowski functional}) of $C(P)$ is defined as
\begin{equation}\label{norm}
	\Vert \mathbf{x} \Vert_{P} :=
	\inf \left\{ \lambda\geq0: \mathbf{x} \in \lambda C(P) \right\}, \quad \mathbf{x}\in\mathbb{R}^{d}.
\end{equation}
Since $C(P)$ is a convex body that is centrally symmetric with respect to the origin, the associated gauge function indeed defines a norm on $\mathbb{R}^d$.
Moreover, it is easily verified that $P^*$ is also a downward closed convex body. Consequently, we may define the gauge function $\Vert \cdot \Vert_{P^*}$ in a manner analogous to \eqref{norm}.
The following lemma will be useful.
\begin{lemma}\label{lemma1}
	Let $P \subset (\mathbb{R}^+)^d$ be a downward closed convex body. Then the following statements are true.
	\begin{itemize}
		\setlength{\itemsep}{0pt}
		\item[\rm (i)]
		For any $\mathbf{x} \in \mathbb{R}^d$, it holds that
		$\Vert \mathbf{x} \Vert_{P^*} = \sup _{\mathbf{y} \in C(P)} (\mathbf{x}\cdot\mathbf{y})$.
		
		\item[\rm (ii)]
		For any $\mathbf{x},\mathbf{y} \in \mathbb{R}^d$, it holds that
		$|\mathbf{x} \cdot \mathbf{y}| \le \Vert \mathbf{x} \Vert_{P} \Vert \mathbf{y} \Vert_{P^*}$. Moreover, for each $\mathbf{x} \in \mathbb{R}^d$, there exists $\mathbf{y} \in \mathbb{R}^d$ satisfying $\Vert \mathbf{y} \Vert_{P^*} = 1$ such that $|\mathbf{x} \cdot \mathbf{y}|=\Vert \mathbf{x} \Vert_{P} $.
	\end{itemize}
\end{lemma}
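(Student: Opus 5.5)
The plan for (i) is to reduce everything to the nonnegative orthant by the symmetry of $C(P)$ and $C(P^*)$. First, for $\mathbf{x}\in\mathbb{R}^d$,
\[
\sup_{\mathbf{y}\in C(P)}(\mathbf{x}\cdot\mathbf{y})=\sup_{\mathbf{y}\in P}(\mathbf{x}^+\cdot\mathbf{y}).
\]
Indeed, for $\mathbf{y}\in C(P)$ we have $\mathbf{x}\cdot\mathbf{y}\le \mathbf{x}^+\cdot\mathbf{y}^+$ with $\mathbf{y}^+\in P$. Conversely, for $\mathbf{y}\in P$, flipping the signs of the coordinates of $\mathbf{y}$ to match those of $\mathbf{x}$ gives a point of $C(P)$ that attains $\mathbf{x}^+\cdot\mathbf{y}$.

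Write $h(\mathbf{x}):=\sup_{\mathbf{y}\in P}(\mathbf{x}^+\cdot\mathbf{y})$. This $h$ is positively homogeneous, and $h(\mathbf{x})<\infty$ because $P$ is compact. Also $h(\mathbf{x})>0$ for $\mathbf{x}\neq0$, since $P$ has nonempty interior and is downward closed, so it contains $\varepsilon\mathbf{e}_i$ for some $\varepsilon>0$ and every $i$.

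By the definition of $P^*$, for $\lambda>0$,
\[
\mathbf{x}\in\lambda C(P^*) \iff \mathbf{x}^+/\lambda\in P^* \iff h(\mathbf{x})\le\lambda.
\]
Taking the infimum over $\lambda$ gives $\Vert\mathbf{x}\Vert_{P^*}=h(\mathbf{x})$. The case $\mathbf{x}=0$ is trivial, and the infimum is attained because the condition $h(\mathbf{x})\le\lambda$ is closed in $\lambda$. This proves (i).

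For (ii), the inequality comes from homogeneity. If $\mathbf{x}=0$ there is nothing to show. Otherwise put $\lambda=\Vert\mathbf{x}\Vert_P>0$; then $\mathbf{x}/\lambda\in C(P)$ because $C(P)$ is closed. By (i) and the sign symmetry of $C(P)$ (replace $\mathbf{x}$ by $-\mathbf{x}$ if necessary),
\[
|\mathbf{x}\cdot\mathbf{y}|=\lambda\,|(\mathbf{x}/\lambda)\cdot\mathbf{y}|\le\lambda\sup_{\mathbf{w}\in C(P)}(\mathbf{w}\cdot\mathbf{y})=\Vert\mathbf{x}\Vert_P\Vert\mathbf{y}\Vert_{P^*}.
\]

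The attainment statement is the main step, and I would prove it by Hahn--Banach (supporting hyperplane) applied to the convex body $C(P)$. Assume $\mathbf{x}\neq0$ (for $\mathbf{x}=0$ take $\mathbf{y}=\mathbf{e}_1/\Vert\mathbf{e}_1\Vert_{P^*}$). Then $\mathbf{x}/\Vert\mathbf{x}\Vert_P$ lies on the boundary of $C(P)$, so there is a nonzero $\mathbf{y}$ with
\[
\mathbf{y}\cdot(\mathbf{x}/\Vert\mathbf{x}\Vert_P)=\sup_{\mathbf{w}\in C(P)}\mathbf{w}\cdot\mathbf{y}.
\]
By (i) the right-hand side equals $\Vert\mathbf{y}\Vert_{P^*}$, which is positive since $\mathbf{y}\ne0$. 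Rescaling $\mathbf{y}$ so that $\Vert\mathbf{y}\Vert_{P^*}=1$ gives $\mathbf{x}\cdot\mathbf{y}=\Vert\mathbf{x}\Vert_P$, as required.

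The only subtle points are the closedness and boundedness facts: $C(P)$ is compact and contains the origin in its interior. Both follow from $P$ being a downward closed convex body.
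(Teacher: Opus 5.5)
Your proof is correct. For (i) it takes a more elementary route than the paper; for (ii) it proves directly what the paper cites.

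\textbf{Part (i).} The paper uses two results from convex geometry. First, the gauge of $C(P^*)$ equals the support function of its polar $(C(P^*))^{\circ}$. Second, $C(P^*)=(C(P))^{\circ}$, which the paper asserts without proof, combined with the bipolar theorem $(C(P))^{\circ\circ}=C(P)$. You compute the gauge directly instead. The sign-symmetry reduction gives $\sup_{\mathbf{y}\in C(P)}\mathbf{x}\cdot\mathbf{y}=\sup_{\mathbf{y}\in P}\mathbf{x}^+\cdot\mathbf{y}=h(\mathbf{x})$. The definition of $P^*$ then shows at once that $C(P^*)=\{h\le 1\}$, which is exactly the identity $C(P^*)=(C(P))^{\circ}$. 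Positive homogeneity of $h$ finishes the argument with no appeal to the bipolar theorem. You only use the trivial fact that the gauge of a polar is the support function.

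\textbf{Part (ii).} Both arguments rest on $\Vert\cdot\Vert_{P^*}$ being the dual norm of $\Vert\cdot\Vert_P$. The paper cites Boyd--Vandenberghe for this. You prove attainment with a supporting hyperplane to $C(P)$ at the boundary point $\mathbf{x}/\Vert\mathbf{x}\Vert_P$. You also correctly check that $C(P)$ is compact with $\mathbf{0}$ in its interior, so $\Vert\mathbf{y}\Vert_{P^*}>0$ and the rescaling is legitimate.

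\textbf{Comparison.} The paper's version is shorter and places the lemma within standard polarity theory. Yours is self-contained and has a practical bonus. Since $h(\mathbf{y})$ depends only on $\mathbf{y}^+$, for $\mathbf{x}\ge\mathbf{0}$ you may replace the supporting vector $\mathbf{y}$ by $\mathbf{y}^+$. This has the same $P^*$-norm and satisfies $\mathbf{x}\cdot\mathbf{y}^+\ge\mathbf{x}\cdot\mathbf{y}=\Vert\mathbf{x}\Vert_P$, hence equality, so you get a nonnegative maximizer. That is exactly the form $\boldsymbol{\alpha}_{\mathbf{k}}\in(\mathbb{R}^+)^d$ the paper needs in the proof of Theorem~\ref{firstthm}, which the lemma as stated does not literally supply.
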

\begin{proof}
	For (i), since $C(P^*)$ is a convex body containing the origin, it follows from \cite[Lemma 1.7.13]{schneider2014convex} that
	\begin{equation}
		\Vert \mathbf{x} \Vert_{P^*} = \sup _{\mathbf{y} \in (C(P^*))^{\circ}} (\mathbf{x}\cdot\mathbf{y}), \notag
	\end{equation}
	where $(\cdot)^{\circ}$ denotes the polar. Moreover, since $C(P^*) = (C(P))^{\circ}$ and $C(P)$ is also a convex body containing the origin, it follows from \cite[Theorem 1.6.2]{schneider2014convex} that $(C(P^*))^{\circ} = (C(P))^{\circ\circ} = C(P)$. Substituting this into the above identity yields the desired result.
	
	For (ii), since $C(P)=\{x\in\mathbb{R}^d:\|x\|_P\leq1\}$, statement (i) implies that $\Vert \cdot \Vert_{P^*}$ is the dual norm of $\Vert \cdot \Vert_{P}$. The second statement then follows from \cite[Appendix A.1.6]{boyd2004convex}.	
\end{proof}

The Bernstein--Walsh theorem holds for a general compact set $K \subset \mathbb{C}^d$, yet the closed form of the $P$-extremal function $V_{P,K}$ remains elusive in most cases. However, when $P$ is a downward closed convex body and $K$ is a Cartesian product of the form $K = K_1 \times \cdots \times K_d$, where each $K_i \subset \mathbb{C}$ is compact and non-polar
(i.e., $K_i$ has a positive logarithmic capacity),
it has been shown in \cite[Proposition~2.4]{bos2018bernstein} that the $P$-extremal function of $K$ can be expressed as
\begin{equation}
	V_{P,K}^{*}(z_1, \dots, z_d) = \sup_{(y_1,\ldots,y_d)\in P} \left( y_1 V_{K_1}^{*}(z_1) + \cdots + y_d V_{K_d}^{*}(z_d) \right). \notag
\end{equation}
In the case of the hypercube, i.e., $K_i=[-1,1]$ for $i=1,\ldots,d$, the extremal function for each $K_i$ is given by $V_{K_i}(z)=\log\rho(z)$, in which $\rho(z) = |z + \sqrt{z^2 - 1}|$ and the branch is chosen so that $\rho(z) \ge 1$. Consequently, the $P$-extremal function of the hypercube $K = [-1,1]^d$ has the closed form
\begin{equation}\label{p[-1,1]}
	V_{P,K}(\mathbf{z}) = \sup_{\mathbf{y} \in P} \big(\log {\rho}(\mathbf{z}) \cdot \mathbf{y} \big)
	= \left\Vert \log {\rho}(\mathbf{z}) \right\Vert_{P^*}, \notag
\end{equation}
where $\log {\rho}(\mathbf{z}) = \left( \log \rho(z_1), \dots, \log \rho(z_d) \right)$, and the second equality follows from the item (i) of Lemma \ref{lemma1}.

\section{Exponential convergence analysis of multivariate Chebyshev projection}\label{section_chebprojection}
In this section, we present a convergence analysis for multivariate Chebyshev projection. We restrict our attention to index sets of the form $nP \cap \mathbb{N}_0^d$, where $n \in \mathbb{N}$ and $P$ is a downward closed convex body.	For $R>1$, we introduce the analyticity domain
\begin{equation}\label{def:AnalRegion}
	\Omega(P,R):= \{ \mathbf{z} \in \mathbb{C}^d :V_{P,[-1,1]^d}(\mathbf{z}) = \Vert \log \rho (\mathbf{z}) \Vert_{P^*} < \log R \}.  \notag
\end{equation}
Note that when $d = 1$ and $P = [0,1]$, $\Omega(P,R)$ reduces to $E(R)$.
In comparison with the two existing analyticity assumptions, namely the Bernstein polyellipse introduced in \cite[Chapter V]{bochner1948several} and the Newton ellipse introduced in \cite{trefethen2017multivariate}, our analyticity assumption can lead to the optimal exponential convergence rate.

\subsection{A new upper bound of Chebyshev coefficients}
Recall that the Chebyshev coefficient $a_{\mathbf{k}}$ in (\ref{onepone}) is defined as
\begin{equation}	
	a_{\mathbf{k}}=\frac{2^{d-\aleph(\mathbf{k})}}{\pi^d} \int_{[-1,1]^d}f(\mathbf{x})T_{\mathbf{k}}(\mathbf{x})\omega(\mathbf{x})\mathrm{d}\mathbf{x}, \notag
\end{equation}
where $\omega(\mathbf{x})=\prod_{i=1}^d (1-x_i^2)^{-1/2}$ and $\aleph(\mathbf{k})=\#\{ i:k_i\!=\!0 \}$. The following estimate of Chebyshev coefficients will be used to establish our results, which can be found in \cite[p. ~95]{bochner1948several} and \cite[Corollary 3.2]{wang2020analysis}. Here we adopt the latter statement.
\begin{lemma}\label{lemmaB}
Assume that $f(\mathbf{z})$ is analytic in a neighbourhood of $E = \overline{E(\rho_1)}\times \cdots \times \overline{E(\rho_d)}$. Then for each $\mathbf{k} =(k_1,\dots,k_d)\in \mathbb{N}_0^d$, it holds that
	\begin{equation}
		|a_\mathbf{k}|\le 2^{d-\aleph(\mathbf{k})} \,\max_{\mathbf{z}\in E}|f(\mathbf{z})|\, \rho_1^{-k_1} \cdots
		\rho_d^{-k_d}. \notag
	\end{equation}
\end{lemma}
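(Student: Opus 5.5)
The plan is to use the standard multivariate Cauchy-integral argument, carried out one variable at a time through the Joukowski map $z_j=(u_j+u_j^{-1})/2$.

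\emph{Step 1: analyticity on the annuli.} Since $f$ is analytic on a neighbourhood of $E=\overline{E(\rho_1)}\times\cdots\times\overline{E(\rho_d)}$, we can fix slightly larger radii $\tilde\rho_j>\rho_j$ such that $f$ is still analytic on $\prod_j \overline{E(\tilde\rho_j)}$. We then set
\[
g(\mathbf{u})=f\bigl((u_1+u_1^{-1})/2,\dots,(u_d+u_d^{-1})/2\bigr).
\]
This function is analytic on the product of annuli $\tilde\rho_j^{-1}<|u_j|<\tilde\rho_j$. It is also invariant under each inversion $u_j\mapsto u_j^{-1}$.

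\emph{Step 2: the coefficients as Laurent coefficients.} Substituting $x_j=\cos\theta_j$ and using $T_{k_j}(\cos\theta_j)=\cos(k_j\theta_j)$ turns the weighted integral defining $a_{\mathbf{k}}$ into a Fourier integral over $[0,\pi]^d$. By evenness in each $\theta_j$, this extends to $[-\pi,\pi]^d$. Writing $\cos(k_j\theta_j)=(e^{ik_j\theta_j}+e^{-ik_j\theta_j})/2$ and using the symmetry of $g$, we should obtain
\[
a_{\mathbf{k}}=2^{d-\aleph(\mathbf{k})}\, c_{\mathbf{k}},
\]
where $c_{\mathbf{k}}$ is the Laurent coefficient of $g$ at $\mathbf{u}^{\mathbf{k}}$. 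The bookkeeping works as follows. For each $k_j\ge1$, the normalisation $2/\pi$ combined with the factor $1/2$ from the cosine and the doubling of the interval gives a factor $2$ relative to the Fourier normalisation $1/(2\pi)$. For $k_j=0$, the normalisation $1/\pi$ gives exactly the Fourier normalisation. This yields precisely the stated power $2^{d-\aleph(\mathbf{k})}$.

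\emph{Step 3: contour deformation.} We express $c_{\mathbf{k}}$ as the iterated Cauchy integral
\[
c_{\mathbf{k}}=\frac{1}{(2\pi i)^d}\oint\cdots\oint g(\mathbf{u})\,\mathbf{u}^{-\mathbf{k}-\mathbf{1}}\,d\mathbf{u}.
\]
By Cauchy's theorem applied in each variable separately, the torus $|u_j|=1$ may be moved to the torus $|u_j|=\rho_j$. On that torus the point $\mathbf{z}$ lies on $\partial E(\rho_1)\times\cdots\times\partial E(\rho_d)\subset E$, so $|g|\le \max_E|f|$. The standard estimate then gives
\[
|c_{\mathbf{k}}|\le \max_E|f|\,\rho_1^{-k_1}\cdots\rho_d^{-k_d},
\]
and combining this with Step 2 proves the lemma.

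The main obstacle is the constant bookkeeping in Step 2, in particular getting the factor $2^{d-\aleph(\mathbf{k})}$ right. One has to check that no extra factor $2$ appears from the terms $e^{-ik_j\theta_j}$. This holds because, by the inversion symmetry of $g$, those terms contribute the same coefficient $c_{\mathbf{k}}$ rather than an independent one. The contour shift itself is routine, since the closed annuli are contained in the domain of analyticity.
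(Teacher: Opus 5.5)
Your proof is correct. The paper gives no proof of its own here; it only cites Bochner--Martin and Wang--Zhang, and your route (Joukowski substitution $z_j=(u_j+u_j^{-1})/2$, then a Cauchy estimate on the torus $|u_j|=\rho_j$) is the classical argument for exactly this bound. Your explanation of the constant in Step~2 is muddled, though the result is right: after extending to $[-\pi,\pi]^d$ the normalisation for each index with $k_j\ge1$ is $\frac{2}{\pi}\cdot\frac12\cdot\frac12=\frac{1}{2\pi}$, and the factor $2$ comes from the $e^{-ik_j\theta_j}$ term, which by the symmetry $u_j\mapsto u_j^{-1}$ contributes a second copy of $c_{\mathbf{k}}$; this gives the identity $a_{\mathbf{k}}=2^{d-\aleph(\mathbf{k})}c_{\mathbf{k}}$ you state.
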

The main result of this subsection is stated as follows.
\begin{theorem}\label{firstthm}
Assume that $f(\mathbf{z})$ is analytic in a neighbourhood of $\overline{\Omega(P,R)}$.
	Then for each $\mathbf{k} \in \mathbb{N}_0^d$, it holds that
	\begin{equation}
		|a_\mathbf{k}|\le
		2^{d-\aleph(\mathbf{k})} \!
		\max_{\!\mathbf{z}\in \overline{\Omega(P,R)}
		}|f(\mathbf{z})|\, R^{-\Vert \mathbf{k} \Vert_{P}}. \notag
	\end{equation}
\end{theorem}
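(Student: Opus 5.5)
The plan is to reduce the bound to Lemma \ref{lemmaB} by choosing, for each fixed $\mathbf{k}$, a Bernstein polyellipse that sits inside $\Omega(P,R)$ and is adapted to the direction of $\mathbf{k}$. The case $\mathbf{k}=\mathbf{0}$ is trivial, since $|a_{\mathbf{0}}|\le \max_{[-1,1]^d}|f|$ and $[-1,1]^d\subset\overline{\Omega(P,R)}$. So fix $\mathbf{k}\neq\mathbf{0}$.

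By Lemma \ref{lemma1}(ii) applied to $\mathbf{x}=\mathbf{k}$, there is $\mathbf{y}\in\mathbb{R}^d$ with $\Vert\mathbf{y}\Vert_{P^*}=1$ and $|\mathbf{k}\cdot\mathbf{y}|=\Vert\mathbf{k}\Vert_P$. Replacing $\mathbf{y}$ by $\mathbf{y}^+$ changes neither the gauge (by the symmetric definition of $C(P^*)$) nor decreases $|\mathbf{k}\cdot\mathbf{y}|$, because $\mathbf{k}\ge\mathbf{0}$. Combined with the inequality $|\mathbf{k}\cdot\mathbf{y}^+|\le \Vert\mathbf{k}\Vert_P$ from Lemma \ref{lemma1}(ii), this gives $\mathbf{y}\ge \mathbf{0}$ with $\mathbf{k}\cdot\mathbf{y}=\Vert\mathbf{k}\Vert_P$.

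Next set $\rho_i=R^{y_i}\ge1$. For $\mathbf{z}\in E=\overline{E(\rho_1)}\times\cdots\times\overline{E(\rho_d)}$ we have $0\le\log\rho(z_i)\le y_i\log R$. By downward closedness, the gauge $\Vert\cdot\Vert_{P^*}$ is monotone on $(\mathbb{R}^+)^d$, so
\[
\Vert\log\rho(\mathbf{z})\Vert_{P^*}\le\log R\,\Vert\mathbf{y}\Vert_{P^*}=\log R .
\]
Hence $E\subset\overline{\Omega(P,R)}$; more precisely, $E\subset\{\Vert\log\rho(\mathbf{z})\Vert_{P^*}\le\log R\}$, which contains $\overline{\Omega(P,R)}$ and in fact equals it by continuity and homogeneity. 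Consequently $f$ is analytic in a neighbourhood of $E$, and $\max_E|f|\le\max_{\overline{\Omega(P,R)}}|f|$. Lemma \ref{lemmaB} then gives
\[
|a_\mathbf{k}|\le 2^{d-\aleph(\mathbf{k})}\max|f|\prod_i R^{-y_ik_i}=2^{d-\aleph(\mathbf{k})}\max|f|\,R^{-\Vert\mathbf{k}\Vert_P}.
\]

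The main obstacle is the degenerate case where some $y_i=0$, so that $\rho_i=1$ and $E(\rho_i)$ collapses to the segment, which does not fit the hypothesis $\rho>1$ of Lemma \ref{lemmaB}. I would handle this by perturbation. Take $\rho_i=R^{y_i}$ scaled slightly down and then raised to some $1+\epsilon$ where needed. Equivalently, use $\mathbf{y}_\epsilon=(1-\epsilon)\mathbf{y}+\epsilon'\mathbf{1}$ with $\Vert\mathbf{y}_\epsilon\Vert_{P^*}\le1+\delta$. Because $f$ is analytic on a neighbourhood of the compact set $\overline{\Omega(P,R)}$, that neighbourhood contains $\overline{\Omega(P,R')}$ for some $R'>R$, so a slightly enlarged polyellipse still lies in the domain of analyticity. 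Apply Lemma \ref{lemmaB} there and let $\epsilon\to0$; the maxima converge by continuity of $f$. A minor additional check is that the reduction to $\mathbf{y}\ge0$ is legitimate, i.e.\ that $\Vert\mathbf{y}^+\Vert_{P^*}=\Vert\mathbf{y}\Vert_{P^*}$, which follows directly from the definition of $C(P^*)$.
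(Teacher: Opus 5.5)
Your proof is correct and follows the paper's argument. You pick a nonnegative dual vector $\mathbf{y}$ with $\Vert\mathbf{y}\Vert_{P^*}=1$ and $\mathbf{k}\cdot\mathbf{y}=\Vert\mathbf{k}\Vert_P$ via Lemma \ref{lemma1}(ii), show the polyellipse with $\rho_i=R^{y_i}$ lies in $\overline{\Omega(P,R)}$, and apply Lemma \ref{lemmaB}; your use of monotonicity of the gauge in place of the paper's sup formula from Lemma \ref{lemma1}(i) is equivalent. Your extra steps, the reduction $\mathbf{y}\mapsto\mathbf{y}^+$ and the perturbation for components $y_i=0$ (as happens for $P=\Sigma^*$, where $\mathbf{y}$ is a coordinate vector), address points the paper passes over silently, and both are handled soundly. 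The perturbation can even stay inside $\overline{\Omega(P,R)}$ if you use $(1-\epsilon)\mathbf{y}+\epsilon\mathbf{1}/\Vert\mathbf{1}\Vert_{P^*}$.
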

\begin{proof}
	Let $\boldsymbol{\alpha}=(\alpha_1,\dots,\alpha_d)$ be an arbitrary nonnegative vector satisfying $\|\boldsymbol{\alpha}\|_{P^*}=1$.
	We claim that
	\begin{equation}
		\overline{E(R^{\, \alpha_1})} \times \cdots \times \overline{E(R^{\, \alpha_d})} \subset \overline{ \Omega(P,R)}. \notag
	\end{equation}
	Indeed, if $\mathbf{z} = (z_1,\dots,z_d)$ satisfies $\rho (z_j) \le R^{\, \alpha_j}$ for $1\le j\le d$, then we have
	\begin{equation}
		\Vert \log\rho (\mathbf{z})\Vert_{P^*}
		= \sup_{ \mathbf{y} \in P} \big( \log \rho (\mathbf{z}) \cdot \mathbf{y} \big) \le \left(\sup_{ \mathbf{y} \in P} \, \boldsymbol{\alpha} \cdot \mathbf{y} \right) \log R = \log R , \notag
	\end{equation}
	where we have used the item (i) of Lemma \ref{lemma1}. By Lemma \ref{lemmaB} with $ E = \overline{E(R^{\, \alpha_1})} \times \cdots \times
	\overline{E(R^{\, \alpha_d})} $, we obtain
	\begin{equation} \label{guess1}
		|a_\mathbf{k}| \le 2^{d-\aleph(\mathbf{k})} \, \max_{\mathbf{z}\in E}|f(\mathbf{z})| \,
		R^{ -\boldsymbol{\alpha} \cdot \mathbf{k} } \le
		2^{d-\aleph(\mathbf{k})} \!
		\max_{\!\mathbf{z}\in \overline{\Omega(P,R)}}|f(\mathbf{z})| \,R^{-\boldsymbol{\alpha} \cdot \mathbf{k}}. \notag
	\end{equation}
	From the item (ii) of Lemma \ref{lemma1}, we know that for any $\mathbf{k} \in \mathbb{N}_0^d$, there exists $\boldsymbol{\alpha}_\mathbf{k} \in (\mathbb{R}^+)^d$ satisfying $\Vert \boldsymbol{\alpha}_\mathbf{k} \Vert_{P^*}=1$ such that $\boldsymbol{\alpha}_\mathbf{k} \cdot \mathbf{k}=\Vert \mathbf{k} \Vert_{P}$. Note that $\boldsymbol{\alpha}$ is arbitrary in the above inequality,
	so we can set $\boldsymbol{\alpha} = \boldsymbol{\alpha}_\mathbf{k}$ and obtain
	\begin{equation}
		|a_\mathbf{k}| \le 2^{d-\aleph(\mathbf{k})}  \!
		\max_{\!\mathbf{z}\in \overline{\Omega(P,R)}}|f(\mathbf{z})|\, R^{-\boldsymbol{\alpha}_\mathbf{k} \cdot \mathbf{k}}
		= 2^{d-\aleph(\mathbf{k})} \!
		\max_{\!\mathbf{z}\in \overline{\Omega(P,R)}}|f(\mathbf{z})|\, R^{-\Vert \mathbf{k} \Vert_{P}}. \notag
	\end{equation}
	This completes the proof.
\end{proof}

\subsection{Exponential convergence rate of multivariate Chebyshev projection}
The following lemma will be used in the convergence analysis for Chebyshev projection.
\begin{lemma}\label{lemmabig}
	Let $\alpha\ge 0$ and $R>1$. For $\gamma>0$, it holds that
	\begin{equation}\label{bigineq}
		\int_{\mathbf{x} \ge \mathbf{0}, \Vert \mathbf{x} \Vert_{P} \ge \gamma}  \Vert \mathbf{x} \Vert_{P}^{\alpha}
		R^{ -\Vert \mathbf{x} \Vert_{P}}  \mathrm{d}\mathbf{x} \le C_1(d,P)
		\left[\sum_{k=1}^{\lceil \alpha+d \rceil} \frac{\gamma^{\alpha+d-k} }{(\log R)^k}\prod_{j=2}^{k}(\alpha + d +  1 - j)\right] \!R^{-\gamma}, \notag
	\end{equation}
	where $\lceil \cdot \rceil$ is the ceiling function, and the product on the right-hand side is assumed to be one when $k=1$. The constant $C_1(d,P)$ is defined by
	\begin{equation}\label{const1}
	C_1(d,P)=	
			\begin{cases}
				\max_{x \in P} x, &d = 1, \\[1ex]
				(\min_{\Vert \mathbf{x}\Vert_{\ell^2}=1} \Vert \mathbf{x}\Vert_{P})^{-d}
				(\pi/2)^{d-\lceil d/2 \rceil} /(d-2)!!, &d \ge 2.
		\end{cases}
	\end{equation}
\end{lemma}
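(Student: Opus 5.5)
We convert the integral to polar-type coordinates adapted to the norm $\Vert\cdot\Vert_P$, then reduce it to a one-dimensional incomplete Gamma integral, which we bound by repeated integration by parts.

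\emph{Case $d=1$.} Here $P=[0,p]$ with $p=\max_{x\in P}x$, and $\Vert x\Vert_P=|x|/p$. Substituting $t=x/p$ gives
\[
\int_{\Vert x\Vert_P\ge\gamma,\,x\ge0} \Vert x\Vert_P^\alpha R^{-\Vert x\Vert_P}\,\mathrm{d}x = p\int_\gamma^\infty t^{\alpha}R^{-t}\,\mathrm{d}t .
\]
This is the one-dimensional integral handled below with $\alpha+d-1=\alpha$, and it produces the factor $C_1=p$.

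\emph{Case $d\ge2$.} Set $m=\min_{\Vert\mathbf{x}\Vert_{\ell^2}=1}\Vert\mathbf{x}\Vert_P>0$. By homogeneity, $\Vert\mathbf{x}\Vert_P\ge m\Vert\mathbf{x}\Vert_{\ell^2}$.

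1. Pass to ordinary spherical coordinates $\mathbf{x}=r\boldsymbol{\theta}$ with $r=\Vert\mathbf{x}\Vert_{\ell^2}$ and $\boldsymbol{\theta}$ on the nonnegative orthant of $S^{d-1}$. Then $\Vert\mathbf{x}\Vert_P=r\,g(\boldsymbol{\theta})$ with $g(\boldsymbol{\theta})=\Vert\boldsymbol{\theta}\Vert_P\ge m$.

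2. For fixed $\boldsymbol{\theta}$, substitute $s=rg(\boldsymbol{\theta})$ in the radial integral. It becomes
\[
g(\boldsymbol{\theta})^{-d}\int_\gamma^\infty s^{\alpha+d-1}R^{-s}\,\mathrm{d}s .
\]

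3. Bound $g^{-d}\le m^{-d}$. The remaining angular integral is the surface measure of the positive orthant part of $S^{d-1}$, namely $|S^{d-1}|/2^d$. Write this via double factorials, $|S^{d-1}|=2\pi^{d/2}/\Gamma(d/2)$, to obtain a constant bounded by $(\pi/2)^{d-\lceil d/2\rceil}/(d-2)!!$. Checking that this double-factorial formula matches exactly, for even and odd $d$ separately, is a small bookkeeping step. If the orthant area is even smaller, an inequality suffices.

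4. The remaining task, for both cases, is the one-dimensional bound. Let $\beta=\alpha+d-1$ and $I(\beta)=\int_\gamma^\infty s^{\beta}R^{-s}\,\mathrm{d}s$. Integrating by parts with $R^{-s}=-\frac{\mathrm{d}}{\mathrm{d}s}\bigl(R^{-s}/\log R\bigr)$ gives
\[
I(\beta)=\frac{\gamma^{\beta}R^{-\gamma}}{\log R}+\frac{\beta}{\log R}\,I(\beta-1).
\]
Iterating produces the terms $\gamma^{\alpha+d-k}\prod_{j=2}^{k}(\alpha+d+1-j)/(\log R)^k$, times $R^{-\gamma}$, for $k=1,2,\dots$.

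5. The main obstacle is stopping the recursion when $\beta$ is not an integer. After $\lceil\alpha+d\rceil-1$ steps, the exponent $\beta'=\alpha+d-\lceil\alpha+d\rceil$ lies in $(-1,0]$. The tail is then $c\,I(\beta')$ with $c\ge0$, since every factor $\alpha+d+1-j$ for $j\le\lceil\alpha+d\rceil-1$ is positive. For $\beta'\le0$ and $s\ge\gamma$ we have $s^{\beta'}\le\gamma^{\beta'}$, so
\[
I(\beta')\le\gamma^{\beta'}\frac{R^{-\gamma}}{\log R}.
\]
This is exactly the $k=\lceil\alpha+d\rceil$ term of the sum. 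When $\alpha+d$ is an integer, $\beta'=0$ and the bound holds with equality.

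Combining steps 1–5 gives the stated inequality with the constant $C_1(d,P)$.
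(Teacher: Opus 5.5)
Your proposal is correct and follows essentially the same route as the paper: spherical coordinates on the positive orthant, reduction of the radial part to $\Vert\boldsymbol{\theta}\Vert_P^{-d}\int_\gamma^\infty s^{\alpha+d-1}R^{-s}\,\mathrm{d}s$, the bound $\Vert\boldsymbol{\theta}\Vert_P\ge\min_{\Vert\mathbf{x}\Vert_{\ell^2}=1}\Vert\mathbf{x}\Vert_P$, and the orthant surface area, which in fact equals $(\pi/2)^{d-\lceil d/2\rceil}/(d-2)!!$ exactly. The differences are minor: you prove the one-dimensional incomplete-Gamma bound by repeated integration by parts, where the paper cites a lemma from earlier work, and you treat $d=1$ explicitly; also, in step 5 the factors that must be positive are $\alpha+d+1-j$ for $2\le j\le\lceil\alpha+d\rceil$ (not only up to $\lceil\alpha+d\rceil-1$), and they all are, since $\lceil\alpha+d\rceil<\alpha+d+1$.
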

\begin{proof}
We denote by $\Psi_{\mathbf{x}}$ the integral on the left-hand side. Using the spherical coordinate transformation $\mathbf{x}=rh(\boldsymbol{\varphi})$ with $h(\boldsymbol{\varphi}) = \left(\cos\varphi_1,\, \dots, \,\sin\varphi_1\sin\varphi_2\cdots\sin\varphi_{d-2}\sin\varphi_{d-1} \right)$, $\boldsymbol{\varphi} \in (0,\pi/2)^{d-1}$,
	we obtain
	\begin{equation}
			\Psi_{\mathbf{x}}
			= \int_{0}^{\pi/2} \cdots \int_{0}^{\pi/2} \left(\int_{\gamma/\Vert h(\boldsymbol{\varphi}) \Vert_{P}} ^{+\infty} r^{\alpha+d-1} R^{-r \Vert h(\boldsymbol{\varphi}) \Vert_{P}}\mathrm{d}r\right) \Vert h(\boldsymbol{\varphi}) \Vert_{P}^{\alpha} I(\boldsymbol{\varphi})\mathrm{d}\boldsymbol{\varphi} \notag
	\end{equation}
	where $I(\boldsymbol{\varphi}) = \sin^{d-2}\varphi_1 \sin^{d-3}\varphi_2 \cdots \sin\varphi_{d-2}$.
	Let $\Psi_{r}$ denote the integral in parentheses. From \cite[Lemma 4.3]{wang2020analysis} we know that
	\begin{equation}
		\Psi_{r} \le \left[ \sum_{k=1}^{\lceil \alpha+d \rceil} \frac{\gamma^{\alpha+d-k}}{(\log R)^k \Vert h(\boldsymbol{\varphi}) \Vert_{P}^{\alpha+d}} \prod_{j=2}^{k}(\alpha + d + 1 - j) \right]  R^{-\gamma}. \notag
	\end{equation}
	Substituting this into $\Psi_{\mathbf{x}}$ gives
	\begin{equation}\label{r2}
		\Psi_{\mathbf{x}} \le \left(\int_{0}^{\pi/2} \cdots \int_{0}^{\pi/2} \frac{I(\boldsymbol{\varphi}) }{\Vert h(\boldsymbol{\varphi}) \Vert_{P}^{d}}
		\mathrm{d}\boldsymbol{\varphi} \right) \left[\sum_{k=1}^{\lceil \alpha+d \rceil} \frac{\gamma^{\alpha+d-k} }{(\log R)^k}\prod_{j=2}^{k}(\alpha +  d  + 1 - j)\right] R^{-\gamma}. \notag
	\end{equation}
	We estimate the integral over $\boldsymbol{\varphi}$ in the above inequality by noting that $\Vert h(\boldsymbol{\varphi}) \Vert_{\ell^2} \equiv 1$ and applying the inequality $\Vert h(\boldsymbol{\varphi}) \Vert_{P}\ge c \Vert h(\boldsymbol{\varphi}) \Vert_{\ell^2}$, with $c = \min_{\Vert \mathbf{x}\Vert_{\ell^2}=1} \Vert \mathbf{x}\Vert_{P} > 0$. Substituting these into the above inequality yields the desired result.
\end{proof}

We now state the main result of this work. Let $S_{nP}(f)$ denote the multivariate Chebyshev projection of $f$ with the index set $nP\cap \mathbb{N}_0^d$, i.e.,
\begin{equation}
	S_{nP}(f)(\mathbf{x})=\sum_{\Vert \mathbf{k} \Vert_{P} \le n}a_{\mathbf{k}}T_{\mathbf{k}}(\mathbf{x}). \notag
\end{equation}
In the following, we establish a quantitative result on the exponential convergence rate of $S_{nP}(f)$ in the maximum norm over the hypercube $[-1,1]^d$. The notation $\mathbf{1}$ refers to $(1,\dots,1)$ in the rest of the paper.
\begin{theorem}\label{thm3}
Assume that \(f(\mathbf{z})\) is analytic in a neighbourhood of \(\overline{\Omega(P,R)}\). Then for \(n\in\mathbb{N}\), it holds that
	\begin{equation}\label{imp2}
		\Vert f-S_{nP}(f) \Vert_{\infty}
		\le
		C_2(d,P,R,f)
		\left[
		\sum_{k=1}^{d}
		\frac{(d-1)!}{(d-k)!}
		\frac{n^{d-k}}{(\log R)^k}
		\right]
		R^{-n},
\end{equation}
where
\begin{equation}\label{const2}
C_2(d,P,R,f) =	2^d R^{\Vert \mathbf{1}\Vert_P} C_1(d,P) \max_{\mathbf{z}\in \overline{\Omega(P,R)}} |f(\mathbf{z})|,
\end{equation}
and \(C_1(d,P)\) is defined in \textnormal{\eqref{const1}}.
\end{theorem}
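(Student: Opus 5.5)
The plan is to bound the tail of the Chebyshev series termwise and then compare the lattice sum with an integral so that Lemma \ref{lemmabig} applies. Since $|T_{\mathbf{k}}(\mathbf{x})|\le 1$ on $[-1,1]^d$ and the expansion converges absolutely, we have
\begin{equation}
\Vert f-S_{nP}(f)\Vert_\infty \le \sum_{\mathbf{k}\in\mathbb{N}_0^d,\ \Vert\mathbf{k}\Vert_P>n}|a_{\mathbf{k}}|
\le 2^d\max_{\mathbf{z}\in\overline{\Omega(P,R)}}|f(\mathbf{z})|\sum_{\Vert\mathbf{k}\Vert_P>n}R^{-\Vert\mathbf{k}\Vert_P}, \notag
\end{equation}
where the second inequality uses Theorem \ref{firstthm} together with $2^{d-\aleph(\mathbf{k})}\le 2^d$. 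It then remains to estimate the lattice sum $\sum_{\Vert\mathbf{k}\Vert_P>n}R^{-\Vert\mathbf{k}\Vert_P}$.

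For the sum-to-integral comparison, attach to each $\mathbf{k}$ the unit cube $Q_{\mathbf{k}}=\mathbf{k}+[0,1]^d$. For $\mathbf{x}\in Q_{\mathbf{k}}$ we have $\mathbf{x}\ge\mathbf{k}\ge\mathbf{0}$ and $\mathbf{x}\le\mathbf{k}+\mathbf{1}$. Since $P$ is downward closed, $\Vert\cdot\Vert_P$ is monotone on the nonnegative orthant, so $\Vert\mathbf{k}\Vert_P\le\Vert\mathbf{x}\Vert_P$. The triangle inequality gives $\Vert\mathbf{x}\Vert_P\le\Vert\mathbf{k}\Vert_P+\Vert\mathbf{1}\Vert_P$. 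Hence
\begin{equation}
R^{-\Vert\mathbf{k}\Vert_P}\le R^{\Vert\mathbf{1}\Vert_P}\int_{Q_{\mathbf{k}}}R^{-\Vert\mathbf{x}\Vert_P}\,\mathrm{d}\mathbf{x}, \notag
\end{equation}
and every such cube lies in $\{\mathbf{x}\ge\mathbf{0},\ \Vert\mathbf{x}\Vert_P\ge n\}$ whenever $\Vert\mathbf{k}\Vert_P>n$. The cubes have disjoint interiors, so summing gives
\begin{equation}
\sum_{\Vert\mathbf{k}\Vert_P>n}R^{-\Vert\mathbf{k}\Vert_P}\le R^{\Vert\mathbf{1}\Vert_P}\int_{\mathbf{x}\ge\mathbf{0},\,\Vert\mathbf{x}\Vert_P\ge n}R^{-\Vert\mathbf{x}\Vert_P}\,\mathrm{d}\mathbf{x}. \notag
\end{equation}
This is where the factor $R^{\Vert\mathbf{1}\Vert_P}$ in $C_2$ comes from.

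Finally, apply Lemma \ref{lemmabig} with $\alpha=0$ and $\gamma=n$. Then $\lceil\alpha+d\rceil=d$ and $\prod_{j=2}^k(d+1-j)=(d-1)!/(d-k)!$. This yields $C_1(d,P)\sum_{k=1}^d\frac{(d-1)!}{(d-k)!}\frac{n^{d-k}}{(\log R)^k}R^{-n}$, and combining the constants gives exactly \eqref{imp2} with $C_2$ as in \eqref{const2}.

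The only delicate step is the monotonicity of $\Vert\cdot\Vert_P$ on $(\mathbb{R}^+)^d$. I would verify it directly: if $\mathbf{0}\le\mathbf{k}\le\mathbf{x}$ and $\mathbf{x}\in\lambda C(P)$, then $\mathbf{x}/\lambda\in P$, and downward closedness gives $\mathbf{k}/\lambda\in P$, so $\mathbf{k}\in\lambda C(P)$. For $d=1$ the same argument goes through, since Lemma \ref{lemmabig} covers that case with its stated $C_1$.
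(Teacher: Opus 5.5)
Your proposal is correct and follows the paper's proof essentially step for step. You bound the tail termwise via Theorem \ref{firstthm}, use a unit-cube sum-to-integral comparison that produces the factor $R^{\Vert\mathbf{1}\Vert_P}$, and then apply Lemma \ref{lemmabig} with $\alpha=0$ and $\gamma=n$. Your explicit verification that $\Vert\cdot\Vert_P$ is monotone on $(\mathbb{R}^+)^d$, derived from downward closedness, fills in a step the paper only asserts.
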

\begin{proof}
Using the fact that $\Vert T_{\mathbf{k}} \Vert_{\infty} \le 1$ for all $\mathbf{k} \in \mathbb{N}_0^d$ and Theorem \ref{firstthm}, we have
\begin{equation} \label{ineq1}
		\Vert f-S_{nP}(f) \Vert_{\infty} \le
		\sum_{\Vert \mathbf{k} \Vert_{P} > n} |a_\mathbf{k}| \le 2^d \max _{\mathbf{z}\in \overline{\Omega(P,R)}
		}|f(\mathbf{z})|
		\sum_{\Vert \mathbf{k} \Vert_{P} > n} R^{-\Vert \mathbf{k} \Vert_{P} }.
\end{equation}
To estimate the last sum in \eqref{ineq1}, for each $\mathbf{k}=(k_1,\dots,k_d)\in\mathbb{N}_0^d$, we introduce the unit hypercube
	\begin{equation}
		H_{\mathbf{k}} = [k_1,k_1+1]\times\cdots\times[k_d,k_d+1]. \notag
	\end{equation}
	Since $\mathbf{x}\in H_{\mathbf{k}}$ implies $\Vert\mathbf{x}\Vert_P \le \Vert\mathbf{k}\Vert_P+\Vert\mathbf{1}\Vert_P$, we have
	\begin{equation}
		R^{-\Vert\mathbf{k}\Vert_P} \le R^{\Vert\mathbf{1}\Vert_P} R^{-\Vert\mathbf{x}\Vert_P}, \quad \mathbf{x}\in H_{\mathbf{k}}. \notag
	\end{equation}
	Using the above inequality, we obtain
	\begin{equation}\label{ineq2}
			\sum_{\Vert \mathbf{k}\Vert_P>n}
			R^{-\Vert \mathbf{k}\Vert_P}
			=
			\sum_{\Vert \mathbf{k}\Vert_P>n}
			\int_{H_{\mathbf{k}}}
			R^{-\Vert \mathbf{k}\Vert_P}\mathrm{d}\mathbf{x}
			\le
			R^{\Vert\mathbf{1}\Vert_P}
			\int_{\bigcup_{\Vert \mathbf{k}\Vert_P>n}H_{\mathbf{k}}}
			R^{-\Vert \mathbf{x}\Vert_P}\mathrm{d}\mathbf{x}.
	\end{equation}
	Moreover, since \(\mathbf{x}\in H_{\mathbf{k}}\) implies
	\(\Vert \mathbf{x}\Vert_P\ge \Vert \mathbf{k}\Vert_P\), we have
	\[
	\bigcup_{\Vert \mathbf{k}\Vert_P>n}H_{\mathbf{k}}
	\subset
	\left\{
	\mathbf{x}\in(\mathbb{R}^+)^d:
	\Vert \mathbf{x}\Vert_P>n
	\right\}.
	\]
	Using the above relation and Lemma \ref{lemmabig} with \(\alpha=0\) and \(\gamma=n\), we obtain
	\begin{equation}
		\begin{split}
			\int_{\bigcup_{\Vert \mathbf{k}\Vert_P>n}H_{\mathbf{k}}}
			R^{-\Vert \mathbf{x}\Vert_P}\mathrm{d}\mathbf{x}
			&\le
			\int_{\mathbf{x}\ge\mathbf{0}, \Vert \mathbf{x}\Vert_P>n}
			R^{-\Vert \mathbf{x}\Vert_P}\mathrm{d}\mathbf{x} \\
			&\le
			C_1(d,P)
			\left[
			\sum_{k=1}^{d}
			\frac{(d-1)!}{(d-k)!}
			\frac{n^{d-k}}{(\log R)^k}
			\right]
			R^{-n}. \notag
		\end{split}
	\end{equation}
	Combining \eqref{ineq1}, \eqref{ineq2} and the above inequality gives \eqref{imp2}. This completes the proof.
\end{proof}

Combining Theorem \ref{thm3} with the Bernstein--Walsh theorem, we conclude that when the index set is induced by a downward closed convex body $P$, the maximum errors of both multivariate Chebyshev projection and best polynomial approximation converge at the same exponential rate $\mathcal{O}(R_P^{-n})$. Here, the maximal convergence number $R_P$ is given by
\begin{equation}\label{RP}
	R_P := \inf _{\mathbf{z} \in S(f)} \exp(\Vert \log \rho (\mathbf{z}) \Vert_{P^*}),
\end{equation}
where $S(f)$ denotes the set of singularities of $f$. Note that the comparison of the convergence rates of Chebyshev, Legendre and their best counterparts in the univariate case (i.e., $d=1$) has been comprehensively discussed in \cite{wang2021much}.

In the special cases where the index sets are induced by 1-, 2-, and $\infty$-norm, i.e., $P=\Sigma$, $P=\mathbb{B}_{2}^{+}$ and $P=\Sigma^*$, which correspond to total, Euclidean and maximal degrees respectively, the maximal convergence numbers are respectively given by
\begin{align}
	& R_{\Sigma} = \inf _{ \mathbf{z} \in S(f)}  \exp\left(\Vert \log \rho (\mathbf{z}) \Vert_{\ell^{\infty}}\right) = \inf _{ \mathbf{z} \in S(f)}  \max_{1\leq i\leq d} \{ \rho(z_i) \},  \notag \\
	& R_{\mathbb{B}_{2}^{+}} = \inf _{ \mathbf{z} \in S(f)}  \exp\left(\Vert \log \rho (\mathbf{z}) \Vert_{\ell^{2}}\right) = \inf _{ \mathbf{z} \in S(f)} \exp\left[ \bigg(\sum_{i=1}^{d} \left( \log\rho(z_i) \right)^2\bigg)^{1/2} \right] ,  \notag \\
	& R_{\Sigma^*} = \inf _{ \mathbf{z} \in S(f)}  \exp\left(\Vert \log \rho (\mathbf{z}) \Vert_{\ell^1}\right) = \inf _{ \mathbf{z} \in S(f)}  \prod _{i=1}^{d}\rho(z_i).  \notag
\end{align}

\begin{remark}
	The maximal convergence number $R_P$ in \eqref{RP} is an optimization problem in multiple complex variables, where the objective function $\exp(\Vert \log \rho (\mathbf{z}) \Vert_{P^*})$ is nonlinear and non-analytic, and the constraint $\mathbf{z} \in S(f)$ is usually nonlinear as well. Except for some specific functions, it is difficult to derive a closed form for $R_P$. Nevertheless, certain optimization algorithms based on Wirtinger calculus may provide an efficient approach for computing $R_P$ \cite{sorber2012unconstrained}. We leave this problem for future research.
\end{remark}

\section{Numerical experiments}\label{section_numerical}
From Theorem \ref{thm3} we can deduce that the maximum error of Chebyshev projection of total degree $n$, i.e., $S_{n\Sigma}(f)$, decays like $\mathcal{O}(R_\Sigma^{-n})$, and the maximum error of Chebyshev projection of maximal degree $n$, i.e., $S_{n\Sigma^*}(f)$, decays like $\mathcal{O}(R_{\Sigma^{*}}^{-n})$. In this section, we present several numerical experiments to verify our theoretical results. The maximum errors are estimated as the largest pointwise errors on a uniform tensor-product grid with \(1001\) points in each coordinate direction over \([-1,1]^d\).

\begin{example}\label{example1}
	We consider the function
	\begin{equation}\label{func1}
		f(\mathbf{x}) = \frac{1}{\prod_{i=1}^{d}x_i^{r_i} + c} \notag
	\end{equation}
	with $\{ r_i\}_{i=1}^d \subset \mathbb{N}$, $c\in \mathbb{R}$, and
	discuss three cases: $\mathcal{A}_1=\{r_i$ is odd for some $i$, $|c|>1\}$, $\mathcal{A}_2=\{r_i$ is even for any $i$, $c<-1\}$ and $\mathcal{A}_3=\{r_i$ is even for any $i$, $c>0\}$.
	As shown in Appendix \ref{appendixA}, for the first two cases, we have
	\begin{equation} \label{Rone}
		R_\Sigma = \rho\left(|c|^{1/ \Vert \mathbf{r} \Vert_{\ell^1}}\right), \qquad  R_{\Sigma^{*}} = \rho\left(|c|^{1/ \Vert \mathbf{r} \Vert_{\ell^{\infty}}}\right),
	\end{equation}
	where $\mathbf{r}=(r_1,\dots,r_d)$.
	For the last case, we do not know the closed forms of $R_\Sigma$ and $R_{\Sigma^{*}}$ for the time being.
However, for the function $f(x_1,x_2) = (x_1^2 x_2^2 + c)^{-1}$ with $c>0$, which is a special example of $\mathcal{A}_3$ (i.e., $d=2$, $r_1 = r_2 = 2$), as shown in Appendix \ref{appendixA}, we have
	\begin{equation} \label{Rspec}
		R_\Sigma=\rho\left(\sqrt{s+1}\right) \textnormal{ with } s=\frac{\sqrt{1+4c}-1}{2}, \qquad R_{\Sigma^{*}}=\rho\left(\mathrm{i}\sqrt{c}\right).
	\end{equation}

Figure \ref{projection1} illustrates the maximum errors of $S_{n\Sigma}(f)$ and $S_{n\Sigma^*}(f)$ for
\begin{equation}
	f(x_1,x_2) = \frac{1}{x_1^2x_2^3+1.3}, \qquad \frac{1}{x_1^2x_2^2-1.1}, \qquad \frac{1}{x_1^2x_2^2+0.4}, \notag
\end{equation}
which belong to $\mathcal{A}_1$, $\mathcal{A}_2$ and $\mathcal{A}_3$ respectively. We see that the numerical results are consistent with our theoretical results.

\begin{figure}
	\centering
	\includegraphics[width=7.2cm,height=5.5cm]{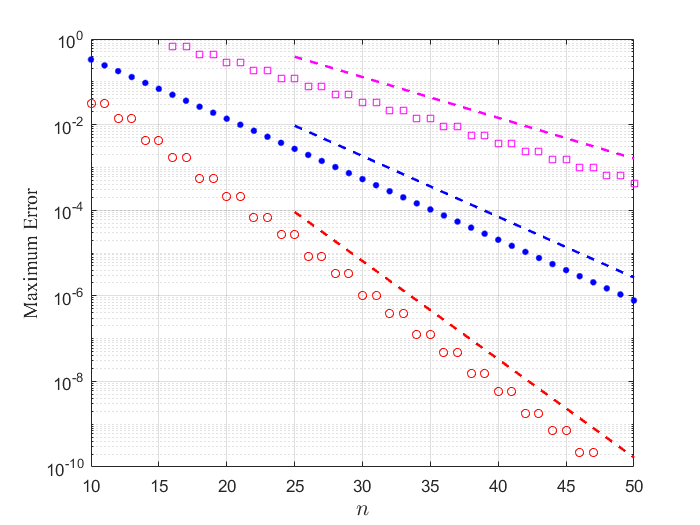}
	\includegraphics[width=7.2cm,height=5.5cm]{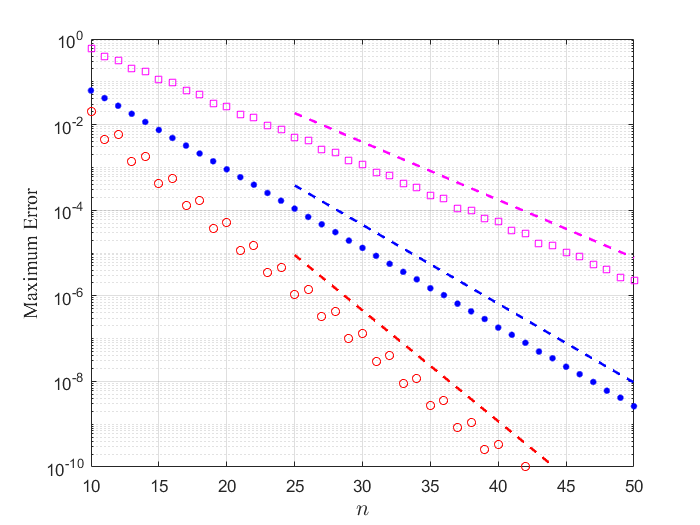}
	\caption{The maximum errors $\Vert f-S_{n\Sigma}(f) \Vert_{\infty}$ (left) and $\Vert f-S_{n\Sigma^*}(f) \Vert_{\infty}$ (right) for $f(x_1,x_2)=(x_1^2x_2^3+1.3)^{-1}$ (dots), $(x_1^2x_2^2-1.1)^{-1}$ (boxes) and $(x_1^2x_2^2+0.4)^{-1}$ (circles). The dashed lines denote the predicted convergence rates $\mathcal{O}(R_\Sigma^{-n})$ (left) and $\mathcal{O}(R_{\Sigma^{*}}^{-n})$ (right).}
	\label{projection1}
\end{figure}

\end{example}

\begin{example}\label{example2}
	
	We consider the function
	\begin{equation}\label{func2}
		f(\mathbf{x}) = \frac{1}{\sum_{i=1}^{d}x_i^r+c} \notag
	\end{equation}
	with $r\in\mathbb{N}$, $c\in \mathbb{R}$, and
	discuss three cases: $\mathcal{B}_1=\{r$ is odd, $|c|>d\}$, $\mathcal{B}_2=\{r$ is even, $c<-d\}$ and $\mathcal{B}_3=\{r$ is even, $c>0\}$.
	As shown in Appendix \ref{appendixA}, for the first two cases, we have
	\begin{equation}\label{Rtwo}
		R_\Sigma = \rho\left(\!\sqrt[r]{|c|/d}\right), \qquad
		R_{\Sigma^{*}} = \rho\left(\!\sqrt[r]{|c|-(d-1)}\right).
	\end{equation}
	For the last case, we still do not know the closed forms of $R_\Sigma$ and $R_{\Sigma^{*}}$. However, Bos and Levenberg in \cite[pp.~375-385]{bos2018bernstein} studied the Runge function $f(\mathbf{x})= (x_1^2 + \dots +x_d^2+c)^{-1}$ with $c>0$, which is a special example of $\mathcal{B}_3$ (i.e., $r=2$), and obtained
	\begin{equation}\label{bosformula}
		R_\Sigma = \rho\left( \textnormal{i} \sqrt{c/d} \right), \qquad R_{\mathbb{B}_{2}^{+}} = \rho\left( \textnormal{i} \sqrt{c}\right), \qquad
		R_{\Sigma^{*}} = \rho\left( \textnormal{i} \sqrt{c} \right).
	\end{equation}

	Figure \ref{projection2} illustrates the maximum errors of $S_{n\Sigma}(f)$ and $S_{n\Sigma^*}(f)$ for
	\begin{equation}
		f(x_1,x_2)= \frac{1}{x_1+x_2+2.15}, \qquad \frac{1}{x_1^4+x_2^4-2.2}, \qquad  \frac{1}{x_1^2+ x_2^2+0.5}, \notag
	\end{equation}
	which belong to $\mathcal{B}_1$, $\mathcal{B}_2$ and $\mathcal{B}_3$ respectively. We see that the numerical results are consistent with our theoretical results.

	\begin{figure}
		\centering
		\includegraphics[width=7.2cm,height=5.5cm]{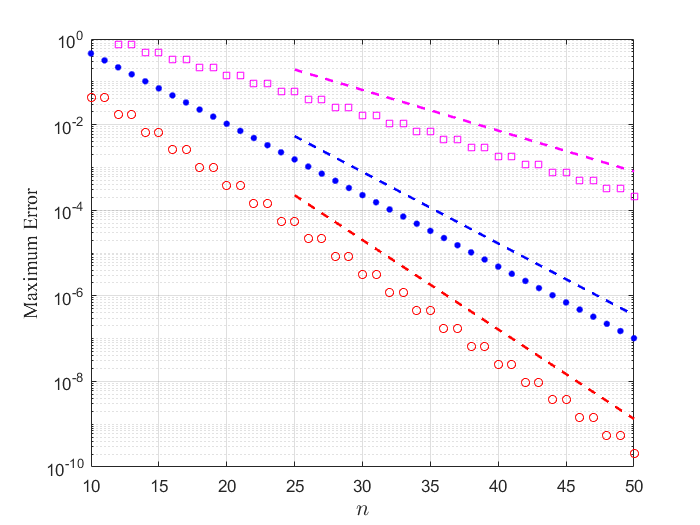}
		\includegraphics[width=7.2cm,height=5.5cm]{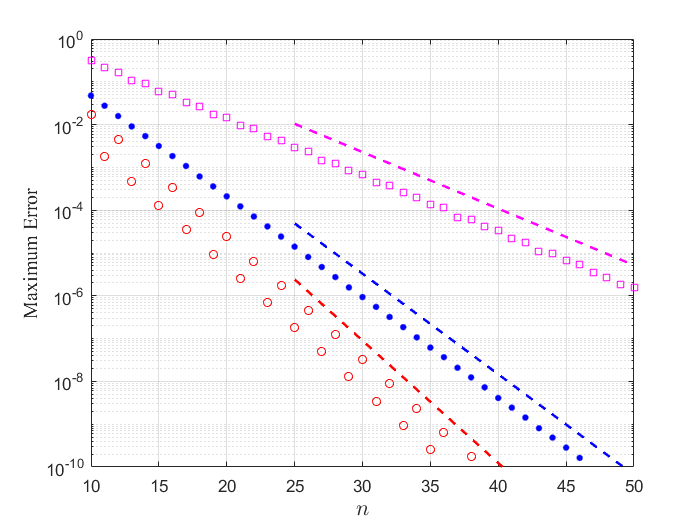}
		\caption{The maximum errors $\Vert f-S_{n\Sigma}(f) \Vert_{\infty}$ (left) and $\Vert f-S_{n\Sigma^*}(f) \Vert_{\infty}$ (right) for $f(x_1,x_2)=(x_1+x_2+2.15)^{-1}$ (dots), $(x_1^4+ x_2^4- 2.2)^{-1}$ (boxes) and $(x_1^2+ x_2^2+0.5)^{-1}$ (circles). The dashed lines denote the predicted rates $\mathcal{O}(R_\Sigma^{-n})$ (left) and $\mathcal{O}(R_{\Sigma^{*}}^{-n})$ (right).}
		\label{projection2}
	\end{figure}
	
\end{example}

\begin{remark}\label{rk:Trefethen}
Trefethen in \cite{trefethen2017multivariate} analyzed the exponential convergence rates of Chebyshev projections with total, Euclidean and maximal degrees. Specifically, under the assumption that $f$ is analytic in the $d$-dimensional region $\{\mathbf{z}\in\mathbb{C}^{d}:z_1^2+\cdots+z_d^2\in N_{d,h^2}\}$, where $h>0$ and $N_{d,h^2} \subset \mathbb{C}$ is the region bounded by the ellipse with foci $0$ and $d$ and leftmost point $-h^2$, Theorem 4.2 in  \cite{trefethen2017multivariate} shows that
	\begin{equation}
		\Vert f-S_{nP}(f) \Vert_{\infty}=	
		\begin{cases}
			\mathcal{O}\left(R^{-n/\sqrt{d}}\right), &P = \Sigma, \\[1ex]
			\mathcal{O}\left({R}^{-n}\right), &P = \mathbb{B}_{2}^{+} \textnormal{ or } P=\Sigma^{*},
		\end{cases} \notag
	\end{equation}
	where $R=h+\sqrt{h^2+1}$. Here we show that Trefethen's result for the total degree (i.e., $P = \Sigma$) is suboptimal. Consider the Runge function $f(\mathbf{x}) = (x_1^2 + \dots+x_d^2+h^2)^{-1}$ with $h>0$.
	While Trefethen's analysis predicts the convergence rate $\mathcal{O}(\widehat{R}^{-n})$ with $\widehat{R}=(h+\sqrt{h^2+1})^{1/\sqrt{d}}$, Theorem \ref{thm3} and (\ref{bosformula}) together yield the sharper rate $\mathcal{O}(R_\Sigma^{-n})$ with
	\begin{equation}
		R_{\Sigma}=\rho\left(\frac{\mathrm{i}h}{\sqrt{d}}\right)=\frac{h+\sqrt{h^2+d}}{\sqrt{d}}. \notag
	\end{equation}
	Note that $R_{\Sigma}>\widehat{R}$ for all $h>0$ and $d\geq2$. For instance, when $(d,h)=(2,1)$, we have $(\widehat{R},R_\Sigma) \approx (1.8649,1.9319)$, and when $(d,h)=(3,3)$, we have $(\widehat{R},R_\Sigma) \approx (2.8573,3.7321)$.
\end{remark}

\section{Applications}\label{section_application}
In this section, we apply our analysis results to several closely related topics, including tensorized Chebyshev interpolation, tensor product Gauss--Legendre quadrature, Padua interpolation and cubature, and Chebyshev-Galerkin method. We will show that the exponential convergence rates of these numerical algorithms are determined by the corresponding maximal convergence numbers.

\subsection{Tensorized Chebyshev interpolation}\label{section5.1}
In this subsection, we consider the error analysis of tensorized Chebyshev interpolation. For $\mathcal{N}=(N_1,\dots,N_d) \in \mathbb{N}_{0}^d$, let $I_{\mathcal{N}}$ be the polynomial interpolation operator corresponding to the tensor product Chebyshev grid $\mathcal{S}_1 \times \cdots \times \mathcal{S}_d$, where $\mathcal{S}_k = \{ \cos (j\pi /N_k) \}_{j=0}^{N_k}$.
It is known that
\begin{equation}
	I_{\mathcal{N}} = I_{x_1}^{N_1} \circ \cdots \circ I_{x_d}^{N_d}, \notag
\end{equation}
where $I_{x_k}^{N_k}$ denotes the interpolation operator for the variable $x_k$ at the nodes $\mathcal{S}_k$. Our error analysis relies on the following lemma, which provides a way to estimate the interpolation error from Chebyshev coefficients.
\begin{lemma}\label{aliasing}
	Assume that $f(\mathbf{x})$ satisfies the Dini--Lipschitz condition. Then for $\mathcal{N}=(N_1,\dots,N_d) \in \mathbb{N}_{0}^d$, it holds that
	\begin{equation}
		\Vert I_{\mathcal{N}}(f)-f \Vert_{\infty} \le
		2\left(\sum_{k_1=N_1+1}^{\infty}\sum_{k_2=0}^{\infty}  \cdots  \sum_{k_d=0}^{\infty} |a_{\mathbf{k}}|
		+ \cdots + \sum_{k_1=0}^{\infty}\sum_{k_2=0}^{\infty} \cdots \sum_{k_d=N_d+1}^{\infty} |a_{\mathbf{k}}| \right). \notag
	\end{equation}
\end{lemma}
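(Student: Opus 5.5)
We will prove Lemma \ref{aliasing} by combining the one-dimensional aliasing identity for Chebyshev--Lobatto interpolation with the tensor-product structure $I_{\mathcal{N}} = I_{x_1}^{N_1}\circ\cdots\circ I_{x_d}^{N_d}$. By the Dini--Lipschitz assumption, $f=\sum_{\mathbf{k}}a_{\mathbf{k}}T_{\mathbf{k}}$ converges uniformly and absolutely on $[-1,1]^d$. Each $I_{x_j}^{N_j}$ is a bounded linear operator on $C([-1,1]^d)$ that acts only on the $j$-th variable through finitely many point evaluations. Hence $I_{\mathcal{N}}$ may be applied term by term:
\[
I_{\mathcal{N}}(f)=\sum_{\mathbf{k}}a_{\mathbf{k}}\,I_{x_1}^{N_1}(T_{k_1})(x_1)\cdots I_{x_d}^{N_d}(T_{k_d})(x_d).
\]

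The key one-dimensional fact is the aliasing property on the Lobatto grid $\{\cos(j\pi/N)\}_{j=0}^{N}$. For $k\le N$ we have $I^N(T_k)=T_k$. For $k>N$, $T_k$ coincides on the grid with $\pm T_{m}$ for some $m\in\{0,\dots,N\}$, namely $m=|k-2\ell N|$ for a suitable $\ell$; this follows from $T_k(\cos(j\pi/N))=\cos(kj\pi/N)$. Therefore $I^N(T_k)=T_m$ and $\|I^N(T_k)\|_\infty\le 1$ for every $k$.

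Next we write $I_{\mathcal{N}}(f)-f=\sum_{\mathbf{k}}a_{\mathbf{k}}\bigl(\prod_j I^{N_j}(T_{k_j})-\prod_j T_{k_j}\bigr)$. If $k_j\le N_j$ for all $j$, the bracket vanishes. Otherwise the bracket has sup norm at most $2$, since each product is a product of functions of sup norm at most $1$. Such a $\mathbf{k}$ has $k_j>N_j$ for at least one $j$, so it lies in the union over $j$ of the sets $\{\mathbf{k}: k_j\ge N_j+1\}$. Bounding the sum over this union by the sum of the $d$ individual sums, which may count some $\mathbf{k}$ more than once but only enlarges the bound, gives
\[
\|I_{\mathcal{N}}(f)-f\|_\infty\le 2\sum_{j=1}^d\sum_{k_j>N_j}\sum_{\text{others}\ \ge 0}|a_{\mathbf{k}}|,
\]
which is exactly the claimed inequality.

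The main point needing care is the justification of term-by-term application of $I_{\mathcal{N}}$ and of the rearrangement. Both follow from absolute summability $\sum_{\mathbf{k}}|a_{\mathbf{k}}|<\infty$, which the Dini--Lipschitz hypothesis guarantees, together with the finiteness of the Lebesgue constant of the tensor interpolant: $I_{\mathcal{N}}$ is continuous on $C([-1,1]^d)$, so it commutes with uniform limits. If the right-hand side is infinite, the inequality holds trivially.
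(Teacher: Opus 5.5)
Your argument is correct, but it is organized differently from the paper's. The paper writes out only $d=2$ and telescopes, $\Vert I_y^{N_2}\circ I_x^{N_1}(f)-f\Vert_\infty\le\Vert I_y^{N_2}\circ I_x^{N_1}(f)-I_x^{N_1}(f)\Vert_\infty+\Vert I_x^{N_1}(f)-f\Vert_\infty$. It bounds each piece by the univariate estimate ``interpolation error $\le 2\times$ coefficient tail'', and uses that aliasing does not increase the $\ell^1$ norm of coefficients, $\sum_{i\le N_1}|c_{i,j}|\le\sum_{i\ge 0}|a_{i,j}|$. Each of the $d$ tail sums in the statement comes from one telescoping step.

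You instead apply $I_{\mathcal{N}}$ once to the full tensor expansion. You use that $I^{N}(T_k)=T_m$ for some $0\le m\le N$; on the Lobatto grid the sign is always $+$, but that is immaterial for you. So every term of the error vanishes when $\mathbf{k}\le\mathcal{N}$ and has norm at most $2|a_{\mathbf{k}}|$ otherwise. This treats all $d$ at once.

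Before your final union bound you actually have a sharper estimate: $\Vert I_{\mathcal{N}}(f)-f\Vert_\infty\le 2\sum|a_{\mathbf{k}}|$ over the $\mathbf{k}$ with $k_j>N_j$ for some $j$, each such index counted once. The telescoping route double counts these indices, because the aliased coefficients $c_{i,j}$ of $I_x^{N_1}(f)$ carry all $i$. For $P=[0,N_1]\times\cdots\times[0,N_d]$ and grid $n\mathcal{N}$, this index set is exactly $\{\Vert\mathbf{k}\Vert_P>n\}$. Hence your version lets Theorem~\ref{thm5} hold with $2\,C_2(d,P,R,f)$ in place of $C_3=2d\,C_2(d,P,R,f)$. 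What the paper's route buys is modularity: it only calls on the one-dimensional error bound and the $\ell^1$-contraction of aliasing.

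One caveat. The Dini--Lipschitz condition does not in general give $\sum_{\mathbf{k}}|a_{\mathbf{k}}|<\infty$ (the paper's introduction asserts the same), but you do not need it. Finiteness of the right-hand side already forces absolute summability, since the complement of the $d$ tails is a finite box. An absolutely convergent Chebyshev series of the continuous function $f$ then sums to $f$ by uniqueness of Chebyshev coefficients. If the right-hand side is infinite, the inequality is trivial, as you note.
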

\begin{proof}
We only prove the case for $d=2$, as the proof for $d\ge3$ is similar. We write the Chebyshev expansions of $f(x,y)$ and $I_x^{N_1}(f)(x,y)$ as
\begin{equation}
	\begin{split}
		&f(x,y) = \sum_{i=0}^{\infty}a_i(y)T_i(x) = \sum_{i=0}^{\infty}\sum_{j=0}^{\infty}a_{i,j}T_j(y)T_i(x), \\
		&I_x^{N_1}(f)(x,y) = \sum_{i=0}^{N_1}c_i(y)T_i(x) = \sum_{i=0}^{N_1}\sum_{j=0}^{\infty}c_{i,j}T_j(y)T_i(x) = \sum_{j=0}^{\infty} \left( \sum_{i=0}^{N_1}c_{i,j}T_i(x) \right)T_j(y). \notag
	\end{split}
\end{equation}
From \cite[Theorem 4.2]{trefethen2019approximation} we know that $\{ a_i(y) \}_{i=0}^\infty$ and $\{ c_i(y) \}_{i=0}^{N_1}$ satisfy the aliasing relationship
\begin{equation}
	\begin{split}
		&c_0(y) = \sum_{l=0}^{\infty} a_{2lN_1}(y), \quad \ c_{N_1}(y) = \sum_{l=0}^{\infty} a_{(2l+1)N_1}(y), \\
		&c_{i}(y) = a_i(y) + \sum_{l=1}^{\infty} \left( a_{2lN_1-i}(y) + a_{2lN_1+i}(y) \right),\quad 1\le i \le N_1-1, \notag
	\end{split}
\end{equation}
which implies that
\begin{equation}\label{cainq}
	\sum_{i=0}^{N_1}|c_{i,j}| \le \sum_{i=0}^{\infty}|a_{i,j}|, \quad \forall j \in \mathbb{N}_0. \nonumber
\end{equation}
Combining this with \cite[Equation (4.9)]{trefethen2019approximation}, we obtain
\begin{equation}
	\begin{split}
		\Vert I_y^{N_2}\circ I_x^{N_1}(f) - f \Vert_{\infty}
		&\le \Vert I_y^{N_2}\circ I_x^{N_1}(f) - I_x^{N_1}(f) \Vert_{\infty} +
		\Vert I_x^{N_1}(f) - f \Vert_{\infty} \\
		&\le 2\sum_{j=N_2+1}^{\infty} \max_{x\in [-1,1]} \left|\sum_{i=0}^{N_1}c_{i,j}T_i(x)\right| + 2\sum_{i=N_1+1}^{\infty} \max_{y\in [-1,1]} |a_{i}(y)| \\
		&\le 2\sum_{j=N_2+1}^{\infty}\sum_{i=0}^{N_1}|c_{i,j}| + 2\sum_{i=N_1+1}^{\infty} \max_{y\in [-1,1]} \left|\sum_{j=0}^{\infty}a_{i,j}T_j(y)\right| \\
		&\le 2\sum_{j=N_2+1}^{\infty}\sum_{i=0}^{\infty}|a_{i,j}| + 2\sum_{i=N_1+1}^{\infty}\sum_{j=0}^{\infty}|a_{i,j}|. \notag
	\end{split}
\end{equation}
This completes the proof.
\end{proof}
The following theorem establishes the exponential convergence rate of $I_{n\mathcal{N}}(f)$ with $n \in \mathbb{N}$ for multivariate analytic functions.
\begin{theorem}\label{thm5}
Let $\mathcal{N}=(N_1,\dots,N_d)$ be a $d$-dimensional positive integer vector, and define $P =[0,N_1]\times \cdots \times [0,N_d]$. Assume that $f(\mathbf{z})$ is analytic in a neighbourhood of $\overline{\Omega(P,R)}$ for some $R>1$. Then for $n \in \mathbb{N}$, it holds that
\begin{equation}\label{errInt}
	\Vert f-I_{n\mathcal{N}}(f) \Vert_{\infty} \le C_3(d,P,R,f) \left[   \sum_{k=1}^{d}\frac{(d-1)!}{(d-k)!} \frac{n^{d-k}}{(\log R)^k}\right] R^{-n}, \notag
\end{equation}
where
$C_3(d,P,R,f)=2d \, C_2(d,P,R,f)$ and $C_2(d,P,R,f)$ is defined in \textnormal{(\ref{const2})}.
\end{theorem}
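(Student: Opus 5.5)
Our plan is to combine Lemma \ref{aliasing} with the coefficient bound of Theorem \ref{firstthm} and then repeat the lattice-to-integral argument from the proof of Theorem \ref{thm3}. Applying Lemma \ref{aliasing} with $n\mathcal{N}=(nN_1,\dots,nN_d)$ gives
\begin{equation}
\Vert f-I_{n\mathcal{N}}(f)\Vert_\infty \le 2\sum_{i=1}^{d}\ \sum_{\mathbf{k}\in\mathbb{N}_0^d,\ k_i>nN_i}|a_{\mathbf{k}}|. \notag
\end{equation}
The key observation is that, for $P=[0,N_1]\times\cdots\times[0,N_d]$, we have $C(P)=\prod_i[-N_i,N_i]$, so the gauge is $\Vert\mathbf{x}\Vert_P=\max_i |x_i|/N_i$. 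Hence $k_i>nN_i$ implies $\Vert\mathbf{k}\Vert_P>n$. Each of the $d$ index sets above is therefore contained in $\{\mathbf{k}:\Vert\mathbf{k}\Vert_P>n\}$, and we obtain the bound $2d\sum_{\Vert\mathbf{k}\Vert_P>n}|a_{\mathbf{k}}|$. We deliberately accept the crude factor $d$ rather than exploiting the overlaps, since the target constant is $C_3=2d\,C_2$.

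Next we insert Theorem \ref{firstthm}, using $2^{d-\aleph(\mathbf{k})}\le 2^d$, to get
\begin{equation}
\Vert f-I_{n\mathcal{N}}(f)\Vert_\infty\le 2d\cdot 2^d\max_{\mathbf{z}\in\overline{\Omega(P,R)}}|f(\mathbf{z})|\sum_{\Vert\mathbf{k}\Vert_P>n}R^{-\Vert\mathbf{k}\Vert_P}. \notag
\end{equation}
This sum is exactly the one estimated in the proof of Theorem \ref{thm3}. We bound it by covering with the unit cubes $H_{\mathbf{k}}$, which costs a factor $R^{\Vert\mathbf{1}\Vert_P}$, and then apply Lemma \ref{lemmabig} with $\alpha=0$ and $\gamma=n$. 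The result is
\begin{equation}
R^{\Vert\mathbf{1}\Vert_P}C_1(d,P)\left[\sum_{k=1}^d\frac{(d-1)!}{(d-k)!}\frac{n^{d-k}}{(\log R)^k}\right]R^{-n}. \notag
\end{equation}
Collecting constants gives $2d\,C_2(d,P,R,f)=C_3$, as claimed.

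The remaining checks are minor. First, $f$ must satisfy the Dini--Lipschitz hypothesis of Lemma \ref{aliasing}; this holds because $f$ is analytic in a neighbourhood of $\overline{\Omega(P,R)}\supset[-1,1]^d$. Second, $P$ must be a downward closed convex body, which a box clearly is. Finally, we verify that the product in Lemma \ref{lemmabig} with $\alpha=0$ reduces to $(d-1)!/(d-k)!$, and this matches Theorem \ref{thm3}. The one genuinely problem-specific step is identifying $\Vert\cdot\Vert_P$ as the weighted max-norm, so that the aliasing tails fall outside $nP$. Once that is done, the proof is a direct reuse of the proof of Theorem \ref{thm3}.
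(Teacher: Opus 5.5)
Your proposal is correct and follows essentially the same route as the paper: apply Lemma \ref{aliasing} to get $\Vert f-I_{n\mathcal{N}}(f)\Vert_\infty\le 2d\sum_{\Vert\mathbf{k}\Vert_P>n}|a_{\mathbf{k}}|$, then reuse the argument from the proof of Theorem \ref{thm3}. The paper leaves that first inequality unexplained, and your identification $\Vert\mathbf{x}\Vert_P=\max_i|x_i|/N_i$ is exactly the justification it relies on.
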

\begin{proof}
From Lemma \ref{aliasing}, we know that
\begin{equation}
	\Vert f-I_{n\mathcal{N}}(f) \Vert_{\infty} \le 2d \sum_{\Vert \mathbf{k} \Vert_{P}>n} |a_{\mathbf{k}}|. \notag
\end{equation}
The desired error estimate follows by combining the above inequality and the proof of Theorem \ref{thm3}.
\end{proof}

Theorem \ref{thm5} shows that the convergence rate of Chebyshev interpolation $I_{n\mathcal{N}}(f)$ with the tensor product Chebyshev grid $\mathcal{S}_1 \times \cdots \times \mathcal{S}_d$ is the same as that of Chebyshev projection $S_{nP}(f)$ with $P=[0,N_1]\times \cdots \times [0,N_d]$. Both converge at the exponential rate $\mathcal{O}(R_P^{-n})$ with maximal convergence number
\begin{equation}\label{RN}
	R_P  = \inf _{ \mathbf{z} \in S(f)} \exp\left(\Vert \log \rho (\mathbf{z}) \Vert_{P^*} \right) = \inf _{ \mathbf{z} \in S(f)} \prod_{i=1}^{d} \rho(z_i)^{N_i}.
\end{equation}

The left subplot of Figure \ref{tencheb_and_tengauss} presents the maximum errors of $I_{n}(f):=I_{n\mathbf{1}}(f)$ for
\begin{equation}\label{testfunc_tencheb}
	f(x_1,x_2) = \frac{1}{x_1^2x_2^3+1.15}, \quad \frac{1}{x_1^2x_2^4-1.1},
	\quad \frac{1}{x_1+x_2+2.1}, \quad \frac{1}{x_1^4+x_2^4-2.05},
\end{equation}
which belong to $\mathcal{A}_1$, $\mathcal{A}_2$, $\mathcal{B}_1$ and $\mathcal{B}_2$, respectively.
Based on $R_{\Sigma^*}$ in (\ref{Rone}) and (\ref{Rtwo}), the corresponding $R_{\Sigma^*}$ for these functions are $\rho(1.15^{1/3})$, $\rho(1.1^{1/4})$, $\rho(1.1)$ and $\rho(1.05^{1/4})$, respectively.
We see that $\Vert f - I_{n}(f) \Vert_{\infty}$ decays exponentially like $\mathcal{O}(R_{\Sigma^{*}}^{-n})$
for each function, as predicted by Theorem \ref{thm5}.

\begin{remark}\label{contrast}
Under the same analyticity assumption as in Lemma \ref{lemmaB}, Sauter and Schwab established the exponential convergence rate $\mathcal{O}(\rho_{\min}^{-n})$ for the tensorized Chebyshev interpolation $I_n(f)$ in \cite[Lemma 7.3.3]{sauter2011boundary}, where $\rho_{\min}=\min_{1\le i\le d}\{\rho_i\}$. Building on this result, Ga\ss~ et al. derived a sharper bound in \cite[Proposition 2.1]{gass2018chebyshev}. However, the exponential convergence rate $\mathcal{O}(\rho_{\min}^{-n})$ has not seen substantial improvement. We claim that these results might be suboptimal for certain functions.
Taking $f(x_1,x_2)=(x_1+x_2+2.1)^{-1}$ in (\ref{testfunc_tencheb}) as an illustrative example, whose Bernstein polyellipse satisfies $(\rho_1+\rho_1^{-1})/2+(\rho_2+\rho_2^{-1})/2\le s$ for some $s\in(2,2.1)$, a simple calculation shows that the maximum of $\rho_{\min}$ is attained when $\rho_1=\rho_2=(s+\sqrt{s^2-4})/2<(21+\sqrt{41})/20$. However, as shown in Figure \ref{tencheb_and_tengauss}, the actual convergence rate is $\Vert f-I_n(f)\Vert_\infty = \mathcal{O}(R_{\Sigma^{*}}^{-n})$ with $R_{\Sigma^{*}} = \rho(1.1) = (11+\sqrt{21})/10 >(21+\sqrt{41})/20$.
\end{remark}

\begin{remark}
All the results and proof techniques in this subsection also apply to multivariate interpolations based on tensor product Chebyshev zeros. The aliasing formula corresponding to Chebyshev zeros can be found in \cite[p.~96]{boyd2001chebyshev}.
\end{remark}

\begin{figure}
\centering
\includegraphics[width=7.2cm,height=5.5cm]{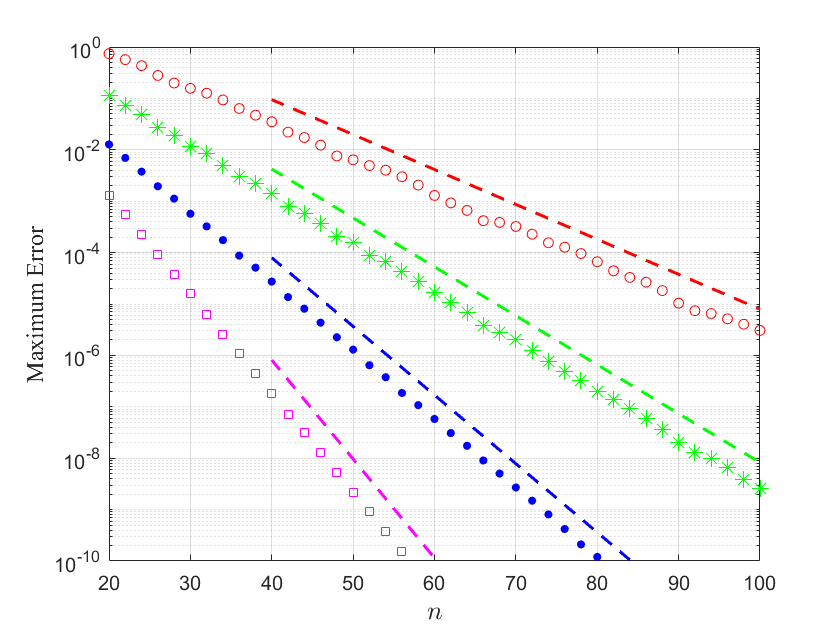}
\includegraphics[width=7.2cm,height=5.5cm]{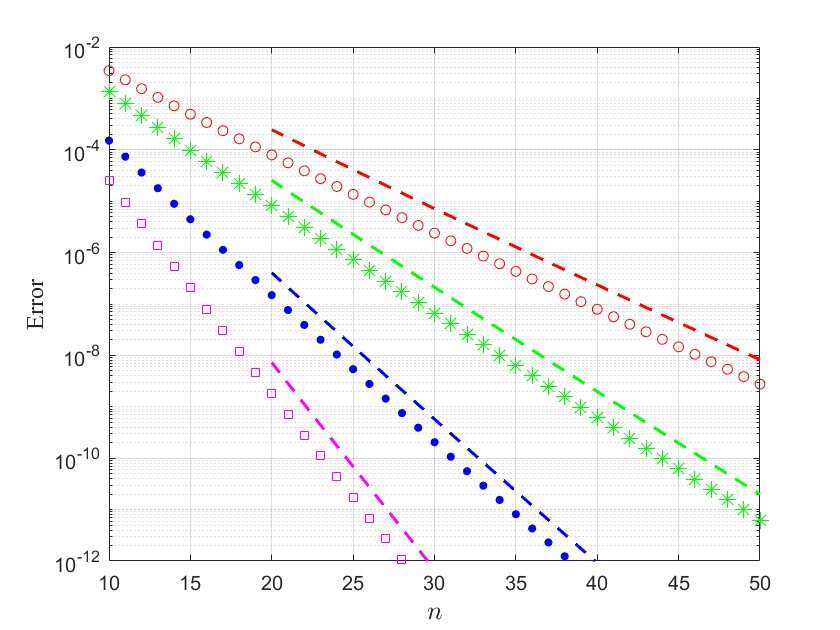}
\caption{The errors $\Vert f-I_{n}(f) \Vert_{\infty}$ (left) and $|Q(f)-Q_n(f)|$ (right) for $f(x_1,x_2)=(x_1^2x_2^3+1.15)^{-1}$ (dots), $(x_1^2x_2^4 - 1.1)^{-1}$ (asterisks), $(x_1+ x_2+2.1)^{-1}$ (boxes) and $(x_1^4+x_2^4-2.05)^{-1}$ (circles). The dashed lines denote the predicted rates $\mathcal{O}(R_{\Sigma^*}^{-n})$ (left) and $\mathcal{O}(R_{\Sigma^{*}}^{-2n})$ (right).}
\label{tencheb_and_tengauss}
\end{figure}

\subsection{Tensor product Gauss--Legendre quadrature}\label{section5.2}
For $\mathcal{N}=(N_1,\dots,N_d) \in \mathbb{N}^d$, let $Q_{\mathcal{N}}(f)$ denote the tensor product Gauss--Legendre quadrature with $N_i+1$ nodes for the $i$-th variable of $f$, i.e.,
\begin{equation}
	Q_\mathcal{N}(f) = \sum_{k_1=0}^{N_1}\cdots \sum_{k_d=0}^{N_d} \left( \prod_{j=1}^{d} w^{N_j}_{k_j} \right) f\big(x^{N_1}_{k_1},\dots,x^{N_d}_{k_d}\big), \notag
\end{equation}
where $\{x_k^N\}_{k=0}^N$ are the zeros of the Legendre polynomial of degree $N+1$, and $\{w_k^N\}_{k=0}^N$ are the corresponding Gauss--Legendre weights on $[-1,1]$.
Let $Q(f)$ denote the integral of $f$ over $[-1,1]^d$. The following theorem establishes the exponential convergence rate of $(Q-Q_{n\mathcal{N}})(f)$ with $n \in \mathbb{N}$ for multivariate analytic functions.

\begin{theorem}\label{thm6}
Let $\mathcal{N}=(N_1,\dots,N_d)$ be a $d$-dimensional positive integer vector, and define $P =[0,N_1]\times \cdots \times [0,N_d]$. Assume that $f(\mathbf{z})$ is analytic in a neighbourhood of $\overline{\Omega(P,R)}$ for some $R>1$. Then for $n \in \mathbb{N}$, it holds that
\begin{equation}
\left| Q(f)-Q_{n\mathcal{N}}(f) \right| \le C_4(d,P,R,f) \left[   \sum_{k=1}^{d}\frac{(d-1)!}{(d-k)!} \frac{(2n)^{d-k}}{(\log R)^k}\right] R^{-2n}, \notag
\end{equation}
where
$C_4(d,P,R,f)=2^{d+1}C_2(d,P,R,f)$ and $C_2(d,P,R,f)$ is defined in \textnormal{(\ref{const2})}.
\end{theorem}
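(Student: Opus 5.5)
The plan is to expand $f$ in its Chebyshev series and apply the quadrature error functional term by term. By the Dini--Lipschitz assumption (implied by analyticity), the expansion \eqref{onepone} converges absolutely and uniformly on $[-1,1]^d$. Both $Q$ and $Q_{n\mathcal{N}}$ are bounded linear functionals on $C([-1,1]^d)$; for $Q_{n\mathcal{N}}$ this uses positivity of the Gauss--Legendre weights, with total weight $2^d$. Hence
\[
(Q-Q_{n\mathcal{N}})(f)=\sum_{\mathbf{k}\in\mathbb{N}_0^d} a_{\mathbf{k}}\,(Q-Q_{n\mathcal{N}})(T_{\mathbf{k}}).
\]
Each term factors, since $Q(T_{\mathbf{k}})=\prod_j Q^{(1)}(T_{k_j})$ and $Q_{n\mathcal{N}}(T_{\mathbf{k}})=\prod_j Q^{(1)}_{nN_j}(T_{k_j})$.

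The univariate Gauss--Legendre rule with $nN_j+1$ nodes is exact for polynomials of degree at most $2nN_j+1$. So if $k_j\le 2nN_j+1$ for every $j$, the term vanishes. In particular it vanishes whenever $\max_j k_j/N_j\le 2n$. For $P=[0,N_1]\times\cdots\times[0,N_d]$ we have $\Vert\mathbf{k}\Vert_P=\max_j k_j/N_j$, so only indices with $\Vert\mathbf{k}\Vert_P>2n$ survive.

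For those indices I bound each term crudely. Since $|T_k|\le1$ on $[-1,1]$, both $|Q(T_{\mathbf{k}})|$ and $|Q_{n\mathcal{N}}(T_{\mathbf{k}})|$ are at most $2^d$, the volume of the cube and the total weight respectively. Thus $|(Q-Q_{n\mathcal{N}})(T_{\mathbf{k}})|\le 2^{d+1}$, and
\[
|Q(f)-Q_{n\mathcal{N}}(f)|\le 2^{d+1}\sum_{\Vert\mathbf{k}\Vert_P>2n}|a_{\mathbf{k}}|.
\]

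From here I rerun the proof of Theorem~\ref{thm3} with $n$ replaced by $2n$. Theorem~\ref{firstthm} gives $|a_{\mathbf{k}}|\le 2^d\max_{\overline{\Omega(P,R)}}|f|\,R^{-\Vert\mathbf{k}\Vert_P}$. The unit-cube comparison \eqref{ineq2} then turns the lattice sum into $R^{\Vert\mathbf{1}\Vert_P}$ times an integral over $\{\mathbf{x}\ge\mathbf{0},\Vert\mathbf{x}\Vert_P>2n\}$. Lemma~\ref{lemmabig} with $\alpha=0$ and $\gamma=2n$ bounds that integral. Collecting constants gives $2^{d+1}C_2(d,P,R,f)$ times the bracket with $(2n)^{d-k}$, times $R^{-2n}$, which is the claim.

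The main point requiring care is the exactness step. One must check that the cutoff set $\{\mathbf{k}:\Vert\mathbf{k}\Vert_P\le 2n\}$ is contained in the exactness box. This holds because $k_j\le 2nN_j$ is integer-valued and $2nN_j<2nN_j+1$. One must also justify interchanging the infinite sum with $Q_{n\mathcal{N}}$, which follows from uniform convergence and positive weights. Everything else is routine reuse of Theorem~\ref{thm3}.
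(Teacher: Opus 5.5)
Your proposal is correct and is essentially the paper's argument: the paper uses exactness on $\mathrm{Poly}(2nP)$ to write $(Q-Q_{n\mathcal{N}})(f)=(Q-Q_{n\mathcal{N}})\bigl(f-S_{2nP}(f)\bigr)$, bounds this by $2^{d+1}\Vert f-S_{2nP}(f)\Vert_\infty$, and applies Theorem~\ref{thm3} with $n$ replaced by $2n$. Your termwise application of the error functional to the Chebyshev series reaches the same quantity $2^{d+1}\sum_{\Vert\mathbf{k}\Vert_P>2n}|a_{\mathbf{k}}|$ that the proof of Theorem~\ref{thm3} bounds, so the two routes differ only in packaging.
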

\begin{proof}
From the fact that $(n+1)$-point Gauss--Legendre quadrature integrates univariate polynomials of degree $\le2n+1$ exactly, it follows that $Q(g)=Q_{n\mathcal{N}}(g)$ for all $g\in \mbox{Poly} \left( 2nP \right)$, which implies that
\begin{equation}
(Q-Q_{n\mathcal{N}})(f)= (Q-Q_{n\mathcal{N}})\left(f-S_{2nP}(f)\right). \notag
\end{equation}
Combining this with the identity $\sum_{k_{j}=0}^{N_{j}}w_{k_{j}}^{N_{j}}=2$ for all $1\le j\le d$, we obtain that
\begin{equation}
\begin{split}
	\left| (Q-Q_{n\mathcal{N}})(f)\right|
	&\le \left| Q\left(f-S_{2nP}(f)\right)\right|
	+\left|Q_{n\mathcal{N}}\left(f-S_{2nP}(f)\right)\right| \\
	&\le \left(\int_{[-1,1]^d}1\mathrm{d}\mathbf{x}+
	\sum_{k_1=0}^{N_1}\cdots \sum_{k_d=0}^{N_d}\prod_{j=1}^{d} w^{N_j}_{k_j}\right)\Vert f-S_{2nP}(f) \Vert_{\infty} \\
	&= 2^{d+1}\Vert f-S_{2nP}(f) \Vert_{\infty}. \notag
\end{split}
\end{equation}
The desired error estimate follows by using Theorem \ref{thm3} to estimate the last term of the above inequality.
\end{proof}

Theorem \ref{thm6} shows that the tensor product Gauss--Legendre quadrature $Q_{n\mathcal{N}}(f)$ converges at the exponential rate $\mathcal{O}(R_P^{-2n})$ for multivariate analytic functions, where the maximal convergence number $R_P$ is defined in (\ref{RN}). The right subplot of Figure \ref{tencheb_and_tengauss} illustrates the errors of $Q_{n}(f):=Q_{n\mathbf{1}}(f)$ for the test functions in (\ref{testfunc_tencheb}). We see that $\left| (Q-Q_n)(f) \right|$ decays exponentially like $\mathcal{O}(R_{\Sigma^{*}}^{-2n})$ for each function, as predicted by Theorem \ref{thm6}.

\subsection{Padua interpolation and cubature}\label{section5.3}
Let $\mathbb{P}^2_n$ denote the space of bivariate polynomials of total degree at most $n$, i.e., $\mathbb{P}^2_n = \mbox{Poly}(n\Sigma)$ with $\Sigma=\{ (x_1,x_2) \in(\mathbb{R}^+)^2: x_1 +x_2 \le 1 \}$.
For $n \in \mathbb{N}$, the set of the Padua points with cardinality $\textnormal{dim}(\mathbb{P}^2_n) = (n+1)(n+2)/2$ on $[-1,1]^2$ is defined as
\begin{equation}
\textnormal{Pad}_n := \left\{ \gamma_n\left(\frac{k \pi}{n(n+1)}\right), k = 0,\dots, n(n+1) \right\}, \notag
\end{equation}
where $\gamma_n(t) = (\cos(nt), \cos((n+1)t))$ traces the algebraic curve given by $T_{n+1}(x) = T_{n}(y)$ \cite{bos2006bivariate}. The Padua points are the first known example of optimal points for total degree polynomial interpolation in two variables, i.e., they are unisolvent and have a Lebesgue constant increasing like $\mathcal{O}(\log^2n)$ \cite[Section 3.2]{sommariva2008}.

Let $I_{\textnormal{Pad}_n}$ denote the interpolation operator at the Padua points $\textnormal{Pad}_n$, and let $Q_{\textnormal{Pad}_n}(f)$ denote the associated cubature formula, which is called the {\it nontensorial Clenshaw--Curtis cubature}, i.e.,
\begin{equation}
Q_{\textnormal{Pad}_n}(f) = \int_{[-1,1]^2} I_{\textnormal{Pad}_n}(f)(x_1,x_2) \textnormal{d}x_1 \textnormal{d}x_2. \notag
\end{equation}
It is known that $I_{\textnormal{Pad}_n}(f)$ and $Q_{\textnormal{Pad}_n}(f)$ are exact for $f \in \mathbb{P}^2_n$.
We refer to \cite{bos2006bivariate,caliari2008bivariate} for the derivation and properties of
Padua interpolation, and to \cite{sommariva2008} for those of nontensorial Clenshaw--Curtis cubature.
Let $Q(f)$ denote the integral of $f$ over $[-1,1]^2$.
The following theorem establishes the exponential convergence rate of $I_{\textnormal{Pad}_n}(f)$ and $Q_{\textnormal{Pad}_n}(f)$ for bivariate analytic functions.

\begin{theorem}\label{thm7}
Assume that $f(z_1,z_2)$ is analytic in a neighbourhood of $\overline{\Omega(\Sigma,R)}$ for some $R>1$. Then for $n \in \mathbb{N}$, it holds that
\begin{equation}
\Vert f-I_{\textnormal{Pad}_n}(f) \Vert_{\infty} \le C_5 \log^2(n) n R^{-n}, \qquad
\left| Q(f) - Q_{\textnormal{Pad}_n}(f)\right| \le C_6 n R^{-n},
\notag
\end{equation}
where $C_5$, $C_6$ are positive constants independent of $n$, but depending on $R$ and $f$.
\end{theorem}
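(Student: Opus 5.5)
Both estimates will be derived from Theorem \ref{thm3} with $d=2$ and $P=\Sigma$, combined with the polynomial exactness of the Padua schemes. Since $I_{\textnormal{Pad}_n}$ reproduces every polynomial in $\mathbb{P}^2_n=\mbox{Poly}(n\Sigma)$, and $S_{n\Sigma}(f)\in\mathbb{P}^2_n$, we have $I_{\textnormal{Pad}_n}(S_{n\Sigma}(f))=S_{n\Sigma}(f)$. Let $\Lambda_n$ denote the Lebesgue constant of Padua interpolation. The standard Lebesgue-type inequality then gives
\begin{equation}
\Vert f-I_{\textnormal{Pad}_n}(f)\Vert_\infty \le \Vert f-S_{n\Sigma}(f)\Vert_\infty+\Vert I_{\textnormal{Pad}_n}(f-S_{n\Sigma}(f))\Vert_\infty \le (1+\Lambda_n)\Vert f-S_{n\Sigma}(f)\Vert_\infty. \notag
\end{equation}
We will quote $\Lambda_n=\mathcal{O}(\log^2 n)$ from \cite[Section 3.2]{sommariva2008}. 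For $\Vert f-S_{n\Sigma}(f)\Vert_\infty$ we use Theorem \ref{thm3} with $d=2$, whose bracket equals $n/\log R+1/(\log R)^2$. Hence $\Vert f-S_{n\Sigma}(f)\Vert_\infty\le C\,nR^{-n}$ for $n\ge1$, where $C$ depends only on $R$ and $f$ through $C_2(2,\Sigma,R,f)$. Multiplying by $1+\Lambda_n$ gives $C_5\log^2(n)\,nR^{-n}$. At $n=1$ we have $\log 1=0$, so we either read the bound for $n\ge2$ or absorb the first term into the constant. This is a bookkeeping point only.

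The cubature estimate should not pass through the interpolation estimate, since that would introduce an unwanted $\log^2 n$ factor. Instead we use exactness of $Q_{\textnormal{Pad}_n}$ on $\mathbb{P}^2_n$, which gives
\begin{equation}
Q(f)-Q_{\textnormal{Pad}_n}(f)=Q(g)-Q_{\textnormal{Pad}_n}(g),\qquad g:=f-S_{n\Sigma}(f). \notag
\end{equation}
Clearly $|Q(g)|\le 4\Vert g\Vert_\infty$. The cubature is a linear functional on point values, $Q_{\textnormal{Pad}_n}(g)=\sum_{\xi\in\textnormal{Pad}_n} w_\xi\, g(\xi)$ with $w_\xi=\int_{[-1,1]^2}\ell_\xi$. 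Therefore $|Q_{\textnormal{Pad}_n}(g)|\le\big(\sum_\xi|w_\xi|\big)\Vert g\Vert_\infty$.

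The main obstacle is showing that $\sum_\xi|w_\xi|$ is bounded uniformly in $n$. The weights of the nontensorial Clenshaw--Curtis cubature are not all positive, so this is not automatic. My first attempt will be to cite \cite{sommariva2008}, where the weights are computed explicitly. There it is observed, and I believe proved, that the negative weights are few and small, so $\sum_\xi|w_\xi|$ stays bounded (tending to $4$). If this cannot be cited, a cruder fallback is to bound $\sum_\xi|w_\xi|\le\int_{[-1,1]^2}\sum_\xi|\ell_\xi|\le 4\Lambda_n$, though that reintroduces $\log^2 n$. A sharper alternative is to expand $g$ in Chebyshev series and use that $Q_{\textnormal{Pad}_n}(T_{\mathbf{k}})$ is uniformly bounded via the aliasing structure of Padua interpolation, in analogy with Lemma \ref{aliasing}. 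This gives $|(Q-Q_{\textnormal{Pad}_n})(g)|\le C\sum_{\Vert\mathbf{k}\Vert_{\Sigma}>n}|a_{\mathbf{k}}|$, and then the proof of Theorem \ref{thm3} applies verbatim. Either route yields $|Q(f)-Q_{\textnormal{Pad}_n}(f)|\le C_6\,nR^{-n}$.
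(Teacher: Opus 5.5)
Your proposal is correct and follows the paper's proof. The interpolation bound is the $\mathcal{O}(\log^2 n)$ Lebesgue-constant estimate times $\Vert f-S_{n\Sigma}(f)\Vert_\infty$. For the cubature, the paper resolves your ``main obstacle'' as in your first option, by quoting the stability bound $|Q(f)-Q_{\textnormal{Pad}_n}(f)|\le \pi^2\inf_{p\in\mathbb{P}^2_n}\Vert f-p\Vert_\infty$ from \cite[Equation (29)]{sommariva2008}; this is precisely the uniform bound on $\sum_\xi|w_\xi|$ you need, and the paper then applies Theorem \ref{thm3} with $d=2$, whereas your $4\Lambda_n$ fallback would not give the stated rate.
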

\begin{proof}
For the first estimate, it follows from \cite[Theorem 3]{bos2006bivariate} that
\begin{equation}
\Vert f-I_{\textnormal{Pad}_n}(f) \Vert_{\infty}
\le C\log^2(n) \inf \left\{ \Vert f-p \Vert_{\infty}: p \in \mathbb{P}_n^2 \right\}
\le C\log^2(n) \Vert f-S_{n\Sigma}(f) \Vert_{\infty}, \notag
\end{equation}
where $C$ is a positive constant independent of $n$ and $f$. For the second estimate, it follows from
\cite[Equation (29)]{sommariva2008} that
\begin{equation}
\left| Q(f) - Q_{\textnormal{Pad}_n}(f)\right|
\le \pi^2 \inf \left\{ \Vert f-p \Vert_{\infty}: p \in \mathbb{P}_n^2 \right\}
\le \pi^2 \Vert f-S_{n\Sigma}(f) \Vert_{\infty}. \notag
\end{equation}
Using Theorem \ref{thm3} to estimate the last term of the above two inequalities gives the desired results.
\end{proof}

Theorem \ref{thm7} shows that the Padua interpolation and the nontensorial
Clenshaw--Curtis cubature converge at the exponential rate $\mathcal{O}(R_{\Sigma}^{-n})$ for bivariate analytic functions.
Figure \ref{padua_plot} illustrates the errors of $I_{\textnormal{Pad}_n}(f)$ and $Q_{\textnormal{Pad}_n}(f)$ for
\begin{equation}
f(x_1,x_2) = \frac{1}{x_1^2x_2^5+3.5}, \quad \frac{1}{x_1^3+ x_2^3-2.35},
\quad \frac{1}{x_1^2+x_2^2-2.5}, \quad \frac{1}{x_1^2+x_2^2+2}, \notag
\end{equation}
which belong to $\mathcal{A}_1$, $\mathcal{B}_1$, $\mathcal{B}_2$ and $\mathcal{B}_3$, respectively.
Based on $R_{\Sigma}$ in (\ref{Rone}), (\ref{Rtwo}) and (\ref{bosformula}), the corresponding $R_{\Sigma}$ for these functions are $\rho(3.5^{1/7})$, $\rho(1.175^{1/3})$, $\rho(1.25^{1/2})$ and $\rho(\textnormal{i})$, respectively.
We see that the numerical results are in agreement with our predictions for Padua interpolation. For Padua cubature, whose errors were computed using the Multiprecision Computing Toolbox, the convergence rates are slightly faster than the predicted rates $\mathcal{O}(R_{\Sigma}^{-n})$.

\begin{remark}
Numerical computations suggest that $|(Q-Q_{\textnormal{Pad}_n})(f)| = \mathcal{O}(n^{-3}R_{\Sigma}^{-n})$ for the above four test functions. This additional algebraic factor might be explained by the aliasing of Chebyshev polynomials on the Chebyshev extreme points \cite[Theorem 5.2]{trefethen2008gauss}. As our focus is the exponential convergence, we omit the discussion of this algebraic factor here.
\end{remark}

\begin{figure}
\centering
\includegraphics[width=7.2cm,height=5.5cm]{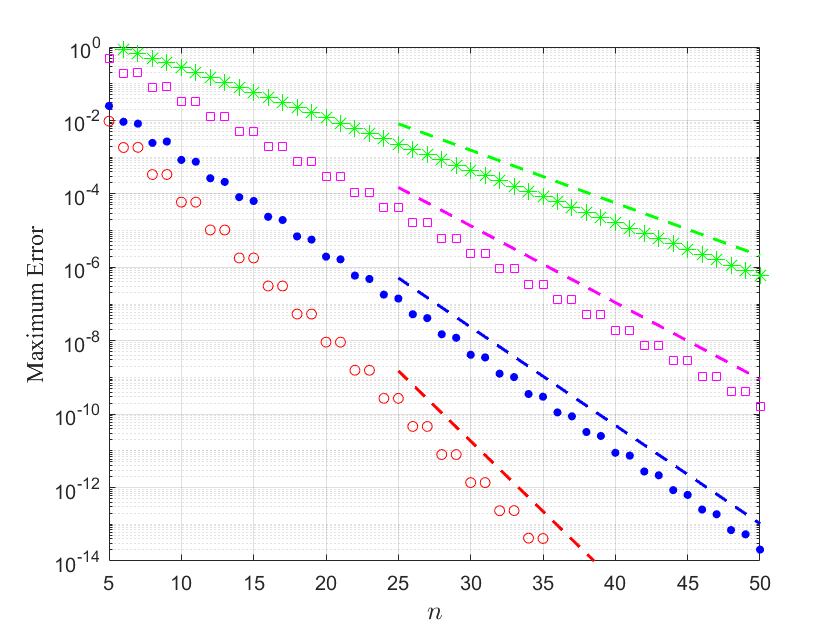}
\includegraphics[width=7.2cm,height=5.5cm]{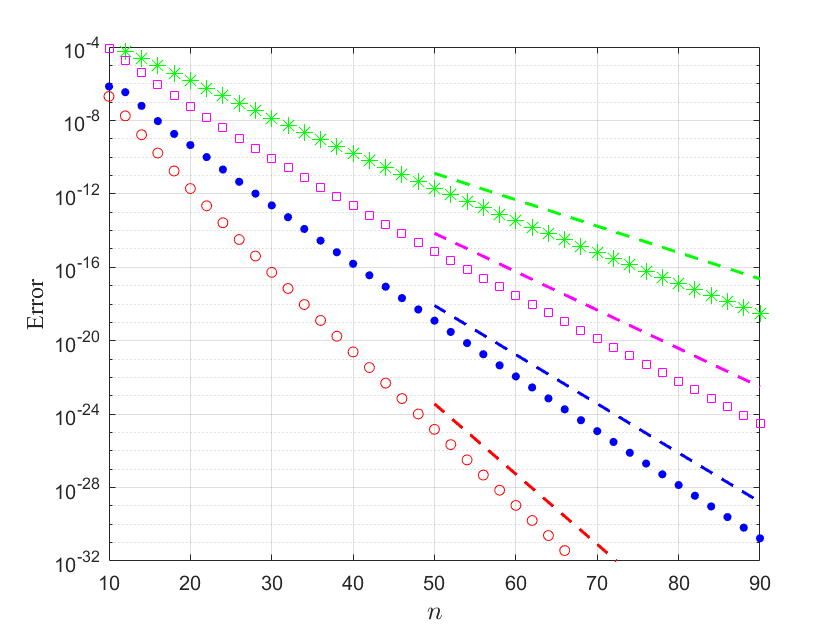}
\caption{The errors $\Vert f-I_{\textnormal{Pad}_n}(f) \Vert_{\infty}$ (left) and $|Q(f)-Q_{\textnormal{Pad}_n}(f)|$ (right) for $f(x_1,x_2)=(x_1^2x_2^5+3.5)^{-1}$ (dots), $(x_1^3+ x_2^3-2.35)^{-1}$ (asterisks), $(x_1^2+x_2^2-2.5)^{-1}$ (boxes) and $(x_1^2+x_2^2+2)^{-1}$ (circles). The dashed lines
denote the predicted rates $\mathcal{O}(R_{\Sigma}^{-n})$.}
\label{padua_plot}
\end{figure}

\subsection{Chebyshev-Galerkin method}\label{section5.4}
Consider the elliptic equation
\begin{equation}\label{elliptic}
\nu u(\mathbf{x}) -\Delta u(\mathbf{x}) = f(\mathbf{x}),\quad  \mathbf{x}\in \mathcal{D} :=(-1,1)^d,
\end{equation}
with homogeneous Dirichlet boundary conditions $u|_{\partial \mathcal{D}} = 0$ and $\nu \ge 0$.
Let $H_{0,\omega}^1(\mathcal{D}) = \{ u\in H_{\omega}^1(\mathcal{D}):u|_{\partial \mathcal{D}} = 0 \}$ be equipped with the norm $\Vert u\Vert_{1,\omega}=(\Vert u \Vert_{\omega}^2+\Vert \nabla u \Vert_{\omega}^2 )^{1/2}$, where $\Vert \cdot\Vert_\omega$ denotes the $L_{\omega}^2(\mathcal{D})$-norm with Chebyshev weight function $\omega(\mathbf{x})=\prod_{i=1}^d \left(1-x_i^2\right)^{-1/2}$. A weak formulation of (\ref{elliptic}) is to find $u\in H_{0,\omega}^1(\mathcal{D})$ such that
\begin{equation}\label{formulation1}
a(u,v):=\nu \int_{\mathcal{D}}uv\omega  + \int_{\mathcal{D}} \nabla u \cdot \nabla (v\omega) = \int_{\mathcal{D}}fv\omega := (f,v)_{\omega},\quad \forall v \in H_{0,\omega}^1(\mathcal{D}).
\end{equation}
From \cite[Lemma 8.7]{shen2011spectral} and the Lax--Milgram lemma \cite[Theorem B.1]{shen2011spectral}, it follows that the bilinear form $a(\cdot,\cdot)$ is continuous and coercive, and the solution of (\ref{formulation1}) exists and is unique.
Let $X_N^0 = \mathrm{span} \big\{ \! \prod_{i=1}^d \psi_{k_i}(x_i):0 \le k_i \le N-2, \, \psi_k(x) = T_k(x)-T_{k+2}(x) \big\}$.
The Chebyshev-Galerkin approximation for (\ref{formulation1}) is to find $u_N \in X_N^0$ such that
\begin{equation}\label{formulation2}
a(u_N,v_N) = (f,v_N)_{\omega}, \quad \forall v_N \in X_N^0.
\end{equation}
Using Theorem \ref{thm3} and some standard arguments, we derive the following convergence result for the numerical scheme (\ref{formulation2}).

\begin{theorem}\label{thmChebGalerkin}
Let $u$ and $u_N$ be the solutions of \eqref{formulation1} and \eqref{formulation2}, respectively. If $u$ is analytic in a neighbourhood of $\overline{\Omega(\Sigma^{*},R)}$ for some $R>1$, then for $N \in \mathbb{N}$, it holds that
\begin{equation}
\Vert u-u_N \Vert_{1,\omega} \le C_7 N^{d-1} R^{-N}, \notag
\end{equation}
where $C_7$ is a positive constant independent of $N$, but depending on $d,R,\nu,u$.
\end{theorem}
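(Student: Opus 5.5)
Since $a(\cdot,\cdot)$ is continuous and coercive on $H^1_{0,\omega}(\mathcal{D})$, I would begin with Céa's lemma:
\[
\Vert u-u_N\Vert_{1,\omega}\le C(\nu,d)\inf_{v_N\in X_N^0}\Vert u-v_N\Vert_{1,\omega}.
\]
It then suffices to exhibit one $v_N\in X_N^0$ with $\Vert u-v_N\Vert_{1,\omega}\le C N^{d-1}R^{-N}$. Note that $X_N^0$ consists exactly of the polynomials of degree at most $N$ in each variable that vanish on $\partial\mathcal{D}$. So the task is to build a good maximal-degree approximant that satisfies the boundary conditions and whose first derivatives are also controlled.

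For the construction I would not apply Theorem \ref{thm3} directly, because it bounds only the sup norm of $u-S_{nP}(u)$, not of its gradient. Instead I would use the coefficient bound of Theorem \ref{firstthm} with $P=\Sigma^*$, for which $\Vert\mathbf{k}\Vert_{\Sigma^*}=\Vert\mathbf{k}\Vert_{\ell^\infty}$. This gives $|a_{\mathbf{k}}|\le C R^{-\max_i k_i}$.

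\begin{itemize}
\item \emph{Derivatives.} Since $|T_k'|\le k^2$ on $[-1,1]$, the truncation $S_{M\Sigma^*}(u)$ satisfies
\[
\Vert\partial_i(u-S_{M\Sigma^*}(u))\Vert_\infty\le C\sum_{\Vert\mathbf{k}\Vert_{\ell^\infty}>M}k_i^2R^{-\Vert\mathbf{k}\Vert_{\ell^\infty}}.
\]
Lemma \ref{lemmabig} with $\alpha=2$ bounds this by $C M^{d+1}R^{-M}$.
\item \emph{Losing the polynomial factor.} The factor $M^{d+1}$ is worse than the claimed $N^{d-1}$. To fix this, I would first shrink $R$ slightly, or more simply bound $\sum_{\max k_i>M}k_i^2R^{-\max k_i}$ by counting: there are $O(m^{d-1})$ indices with $\max k_i=m$. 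This gives $\sum_{m>M}m^{d+1}R^{-m}\sim M^{d+1}R^{-M}$, which is still off by $M^2$.
\item \emph{Choice of $M$.} I would therefore take $M=N-2$ and accept the loss in the constant by replacing $R$ with the analyticity parameter. The statement's $N^{d-1}$ would come from weighting by $\omega$ in $L^2$: Parseval in the Chebyshev-weighted norm gives $\Vert\partial_i w\Vert_\omega^2\lesssim\sum|b_{\mathbf{k}}|^2$ with derivative coefficients $b$.
\end{itemize}

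To handle the derivative coefficients cleanly, I would use the identity $T_k'=2k\sum T_j$ (the sum running over $j<k$ of opposite parity). Combined with $|a|\le CR^{-\Vert\mathbf{k}\Vert_\infty}$, it bounds the tail in $H^1_\omega$ by
\[
\Bigl(\sum_{m>M}m^{d-1}\,m^{4}R^{-2m}\Bigr)^{1/2}.
\]
This is roughly $m^{(d+3)/2}R^{-M}$, again polynomially larger than claimed but with the correct exponential rate. I expect that sharpening the polynomial prefactor to exactly $N^{d-1}$ is the main obstacle. My first attempt would be to use the full analyticity in $\Omega(\Sigma^*,R)$: apply Theorem \ref{firstthm} to $\partial_i u$ itself, which is analytic on the same region. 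By Cauchy estimates on a slightly smaller region this costs only a constant, and applying Theorem \ref{thm3} to $\partial_i u$ then gives $N^{d-1}R^{-N}$ for the gradient sup-error of the truncation.

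The remaining step is enforcing the boundary conditions. The truncation $S_{N\Sigma^*}(u)$ need not vanish on $\partial\mathcal{D}$. I would correct it by subtracting, in each variable successively, the linear-in-$x_i$ interpolant of its boundary traces; in one variable this is $p\mapsto p-\frac{1+x}{2}p(1)-\frac{1-x}{2}p(-1)$, and these corrections tensorize. Because $u$ itself vanishes on $\partial\mathcal{D}$, the boundary traces of $S(u)$ equal those of $S(u)-u$. So the correction is bounded, together with its derivatives, by the sup errors of $S(u)-u$ and $\nabla(S(u)-u)$ on faces of the cube, all of which are $O(N^{d-1}R^{-N})$. The corrected polynomial lies in $X_N^0$. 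Bounding the weighted $H^1$ norm by sup norms (the weight is integrable) and invoking Céa's lemma finishes the proof.
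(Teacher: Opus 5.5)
Your proposal has a genuine gap at the step meant to remove the polynomial loss. Applying Theorem \ref{thm3} to $\partial_i u$ bounds $\Vert \partial_i u-S_{N\Sigma^*}(\partial_i u)\Vert_\infty$. But the polynomial you actually use is the corrected $S_{N\Sigma^*}(u)$, and $\partial_i S_{N\Sigma^*}(u)\neq S_{N\Sigma^*}(\partial_i u)$, because truncation does not commute with differentiation. What you need is a bound on $\partial_i\bigl(u-S_{N\Sigma^*}(u)\bigr)=\sum_{\Vert\mathbf{k}\Vert_{\ell^\infty}>N}a_{\mathbf{k}}\,\partial_i T_{\mathbf{k}}$. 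From $|a_{\mathbf{k}}|\le CR^{-\Vert\mathbf{k}\Vert_{\ell^\infty}}$ this gives only the $N^{d+1}R^{-N}$ (sup norm) or $N^{(d+3)/2}R^{-N}$ (weighted $L^2$) that you already found. The loss is intrinsic to truncating and then differentiating at a fixed $R$. For $d=1$, the coefficient of $T_k$, $1\le k\le N$, in $(u-S_Nu)'$ is $2\sum_{j>N,\ j-k\ \mathrm{odd}} j a_j$. This has size $\asymp NR^{-N}$ on each of these $\sim N$ modes when $a_j\asymp R^{-j}$ with constant sign (e.g.\ $u(x)=1/(a-x)$, $a>1$, $R=a+\sqrt{a^2-1}$). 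So your later assertion that $\nabla(S(u)-u)$ is $O(N^{d-1}R^{-N})$ on the faces is unsupported. The boundary correction and C\'ea steps are fine in themselves, but they do not deliver the stated bound.

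The idea you are missing, and which the paper uses, is to truncate a derivative and then integrate. For $d=2$ the paper sets $p=S_{(N-1)\Sigma^*}(u_{xy})$ and takes $\phi\in X_N^0$ to be $\int_{-1}^y\int_{-1}^x p$ minus the bilinear corrections that enforce the boundary conditions. Since $u=0$ on $\partial\mathcal{D}$, one has $u_x=\int_{-1}^y u_{xy}(x,\eta)\,\mathrm{d}\eta$, and the integrals of $u_{xy}$ across the full interval in either variable vanish. Hence every term of $u_x-\phi_x$ (and of $u_y-\phi_y$) is an integral of $u_{xy}-p$. With the weighted Poincar\'e inequality this gives $\Vert u-\phi\Vert_{1,\omega}\le C\Vert u_{xy}-p\Vert_\infty$. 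Theorem \ref{thm3} applied with $n=N-1$ to $u_{xy}$, which is analytic near the same closed region, then gives $CN^{d-1}R^{-N}$ with no Markov-type loss; for general $d$ one integrates a truncation of $\partial_{x_1}\cdots\partial_{x_d}u$ in the same way. Alternatively, your own estimate could be rescued by noting that analyticity in an open neighbourhood of the compact set $\overline{\Omega(\Sigma^*,R)}$ implies analyticity near $\overline{\Omega(\Sigma^*,R')}$ for some $R'>R$, so that $N^{d+1}R'^{-N}\le CN^{d-1}R^{-N}$. That is not the argument you give, though (your suggestion to shrink $R$ goes in the wrong direction), and it absorbs the prefactor into the slack rather than deriving it.
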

\begin{proof}
We only prove the case for $d=2$, as the proof for $d \ge 3$ is similar. We denote by $C$ a generic positive constant independent of $N$ that may have different values at different places.
From C\'{e}a's lemma \cite[Theorem 1.2]{shen2011spectral}, we know that $u_N$ exists and is unique, and satisfies
\begin{equation}\label{spec1}
\Vert u-u_N \Vert_{1,\omega} \le C \Vert u-v \Vert_{1,\omega},\quad  \forall v \in X_N^0.
\end{equation}
Noting that the partial derivatives of $u(x,y)$ of any order exist and are analytic in a neighbourhood of $\overline{\Omega(\Sigma^{*},R)}$,
we define
\begin{equation}
\begin{split}
\phi(x,y) = &\int_{-1}^y \int_{-1}^x p(\zeta,\eta)\mathrm{d}\zeta\mathrm{d}\eta
- \frac{x+1}{2}\int_{-1}^y \int_{-1}^1 p(\zeta,\eta)\mathrm{d}\zeta\mathrm{d}\eta
\\
&-\frac{y+1}{2}\int_{-1}^1 \int_{-1}^xp(\zeta,\eta)\mathrm{d}\zeta\mathrm{d}\eta + \frac{(x+1)(y+1)}{4}\int_{-1}^1 \int_{-1}^1 p(\zeta,\eta)\mathrm{d}\zeta\mathrm{d}\eta,
\end{split} \notag
\end{equation}
where $p(\zeta,\eta) = S_{(N-1)\Sigma^*}(u_{xy})(\zeta,\eta)$. A direct calculation gives $\phi \in X_N^0$ and
\begin{equation}\label{fourpart}
\begin{split}
\Vert u_x - \phi_x \Vert_{\omega} \le & \left\Vert u_x- \int_{-1}^y p(x,\eta)\mathrm{d}\eta \right\Vert_{\omega}
+ \frac{1}{2} \left\Vert\int_{-1}^y \int_{-1}^1 p(\zeta,\eta)\mathrm{d}\zeta \mathrm{d}\eta\right\Vert_{\omega}		 \\
& +\frac{1}{2}\left\Vert (y+1)\int_{-1}^1 p(x,\eta)\mathrm{d}\eta\right\Vert_{\omega}+ \frac{\Vert y+1 \Vert_{\omega}}{4}\left|\int_{-1}^1\int_{-1}^1 p(\zeta,\eta)\mathrm{d}\zeta\mathrm{d}\eta\right|. \notag
\end{split}
\end{equation}
Following similar steps to the proof of \cite[Theorem 3.38]{shen2011spectral},
we know that the four terms on the right side of the above inequality can be bounded by $\left\Vert u_{xy}-p \right\Vert_\infty$.
Applying a similar argument to $\Vert u_y-\phi_y \Vert_\omega$ and using the weighted Poincar\'{e} inequality \cite[Lemma 8.6]{shen2011spectral}, we have
\begin{equation}\label{spec2}
	\begin{split}
		\Vert u -\phi \Vert_{1,\omega}
		&\le C \left( \Vert u_x - \phi_x \Vert_{\omega}^2 + \Vert u_y - \phi_y \Vert_{\omega}^2 \right)^{1/2}	\\
		& \le C\Vert u_{xy}-p \Vert_\infty = C\Vert u_{xy}-S_{(N-1)\Sigma^*}(u_{xy}) \Vert_\infty.
	\end{split}
\end{equation}
Taking $v = \phi$ in \eqref{spec1} and using Theorem \ref{thm3} to estimate the last term of \eqref{spec2}, we obtain the desired result.
\end{proof}

To validate our theoretical results, we employ the Chebyshev-Galerkin method proposed in \cite{shen1995efficient} to solve the equation (\ref{elliptic}) for $d=2$ and $\nu = 1$, taking $\widetilde{u}(x,y)$ as the exact solution. Here, $\widetilde{u}(x,y)$ denotes $u(x,y)$ with the boundary data lifted according to \cite[Section 4.2]{shen1994efficient}, i.e., $\widetilde{u}|_{\partial\mathcal{D}} = 0$.
It is easy to verify that the maximal convergence number $R_{\Sigma^*}$ for $\widetilde{u}(x,y)$ is equal to  $R_{\Sigma^*}$ for $u(x,y)$.
Figure \ref{ChebGalerkin} illustrates the $H_{\omega}^1(\mathcal{D})$-errors $\Vert \widetilde{u}-\widetilde{u}_N \Vert_{1,\omega}$ for
\begin{equation}
u(x,y) = \frac{\sin(x+y)}{2+x^3 y^4},\quad \frac{4x^2+3y^3}{2.5+x+y},\quad
\frac{e^{\sin(x+1.2y)}}{2.35+x^3+y^3},\quad \frac{e^{0.3x+\cos{y}}}{0.09+x^2+y^2}. \notag
\end{equation}
It follows from (\ref{Rone}), (\ref{Rtwo}) and (\ref{bosformula}) that the values of $R_{\Sigma^*}$ for each $u(x,y)$ are $\rho(2^{1/4})$, $\rho(1.5)$, $\rho(1.35^{1/3})$ and $\rho(0.3\mathrm{i})$, respectively. We see that $\Vert \widetilde{u}-\widetilde{u}_N \Vert_{1,\omega}$ decays exponentially like $\mathcal{O}(R_{\Sigma^*}^{-N})$ for each case, consistent with our theoretical results.

\begin{figure}
\centering
\includegraphics[width=8.4cm,height=6.2cm]{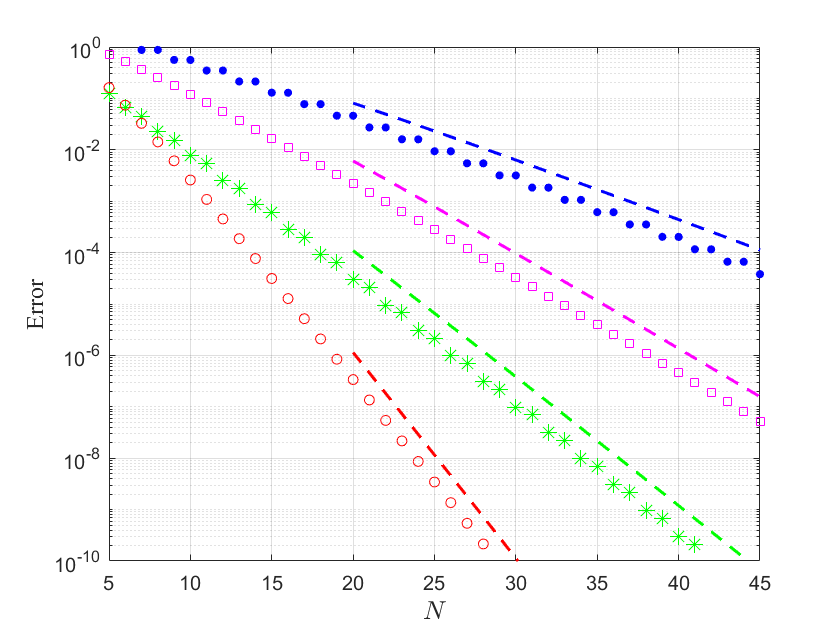}
\caption{The $H_{\omega}^1(\mathcal{D})$-errors $\Vert \widetilde{u}-\widetilde{u}_N \Vert_{1,\omega}$ for $u(x,y) = \sin(x+y)/(2+x^3 y^4)$ (asterisks), $(4x^2+3y^3)/(2.5+x+y)$ (circles), $e^{\sin(x+1.2y)}/(2.35+x^3+y^3)$ (boxes) and $e^{0.3x+\cos{y}}/(0.09+x^2+y^2)$ (dots).
The dashed lines denote the predicted convergence rates $\mathcal{O}(R_{\Sigma^*}^{-N})$.}
\label{ChebGalerkin}
\end{figure}

\appendix
\section{Proofs of (\ref{Rone}), (\ref{Rspec}) and (\ref{Rtwo})}\label{appendixA}
The idea for the proofs of (\ref{Rone}), (\ref{Rspec}) and (\ref{Rtwo}) is to first obtain a lower bound for $R_\Sigma$ or $R_{\Sigma^{*}}$, and then prove that this lower bound can be attained. The following results about $\rho(z)$ are required.

\begin{lemma}\label{lemmaA}
For the function $\rho(z)=|z+\sqrt{z^2-1}|$, where the branch is chosen so that $\rho(z) \ge 1$, the following statements are true.

\begin{itemize}
\setlength{\itemsep}{0pt}
\item[\rm (i)]
For $z\in \mathbb{C}$, it holds that $\rho(-z)=\rho(z)$, $\rho(\bar{z})=\rho(z)$.

\item[\rm (ii)]
For $z\in \mathbb{C}$, it holds that $\rho(|z|)\le \rho(z)$. The equality holds if and only if $z\in \mathbb{R}$.

\item[\rm (iii)]
For $x_i\ge 1,\ 1\le i \le d$, it holds that
\begin{equation}
\rho\left( \prod_{i=1}^{d}x_i \right)\le \prod_{i=1}^{d}\rho (x_i). \notag
\end{equation}
The equality holds if and only if $d-1$ terms in $\{ x_i \}_{i=1}^d$ are equal to one.
\end{itemize}
\end{lemma}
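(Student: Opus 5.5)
The natural tool is the Joukowski parametrization. Every $z\in\mathbb{C}$ can be written as $z=(u+u^{-1})/2$, and we may take $|u|=\rho(z)\ge 1$. Solving the quadratic $u^2-2zu+1=0$ gives $u=z\pm\sqrt{z^2-1}$. The two roots have product $1$, so $\rho(z)=\max\{|u|,|u^{-1}|\}$. In particular $\rho(z)\ge1$ is the larger root modulus, and $\partial E(r)$ is exactly the level set $\{\rho=r\}$. On $|u|=r$, with $u=re^{i\theta}$, we have
\[
\operatorname{Re}z=\tfrac12(r+r^{-1})\cos\theta,\qquad \operatorname{Im}z=\tfrac12(r-r^{-1})\sin\theta.
\]
This gives the identity
\[
|z|^2=\tfrac14\left(r^2+r^{-2}+2\cos 2\theta\right).
\]

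For (i), we use the symmetries of the parametrization. Replacing $u$ by $-u$ sends $z$ to $-z$. Replacing $u$ by $\bar u$ sends $z$ to $\bar z$. Neither operation changes $|u|$, so both values of $\rho$ coincide.

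For (ii), fix $z$ with $\rho(z)=r$. By the identity above, $|z|^2\le \tfrac14(r+r^{-1})^2$. Equality holds iff $\cos2\theta=1$, that is, iff $u$ is real, which is exactly the case $z\in\mathbb{R}$. On $[1,\infty)$ the function $x\mapsto\rho(x)=x+\sqrt{x^2-1}$ is increasing and satisfies $\rho\bigl(\tfrac12(r+r^{-1})\bigr)=r$. Hence, if $|z|\ge1$,
\[
\rho(|z|)\le\rho\left(\tfrac12(r+r^{-1})\right)=r=\rho(z).
\]
If $|z|<1$, then $\rho(|z|)=1\le\rho(z)$. Equality in that case forces $\rho(z)=1$, i.e.\ $z\in[-1,1]$, which is real. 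Conversely, for real $z$ we have $\rho(z)=\rho(|z|)$ by (i). I should check that the equality characterization is consistent in both regimes $|z|\ge1$ and $|z|<1$, but it is.

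For (iii), induction on $d$ reduces everything to the case $d=2$. Write $x=\cosh a$ and $y=\cosh b$ with $a,b\ge0$, so that $\rho(x)=e^a$ and $\rho(y)=e^b$. The claim becomes $\cosh a\cosh b\le\cosh(a+b)$. This follows from
\[
\cosh(a+b)=\cosh a\cosh b+\sinh a\sinh b .
\]
Equality holds iff $\sinh a\sinh b=0$, i.e.\ iff one of $x,y$ equals $1$, and $\rho$ is increasing on $[1,\infty)$. For the inductive step, apply the two-variable inequality to $x_1\cdots x_{d-1}$ and $x_d$, then use monotonicity of $\rho$ together with the induction hypothesis. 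Strictness propagates: equality at every stage forces all but one of the $x_i$ to equal $1$. The main care needed here is this equality bookkeeping. Note that when all the $x_i$ equal $1$, "$d-1$ of them equal one" still holds, so the stated condition is correct in that edge case.
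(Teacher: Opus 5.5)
Your proof is correct, and for (i) and (ii) it follows the paper's route. The paper also argues (i) by the symmetry of the Bernstein ellipses. For (ii) it uses the geometric fact that the ellipse with semi-major axis $|z|$ lies in the closed disk of radius $|z|$ and touches its boundary only at $\pm|z|$. Your identity $|z|^2=\tfrac14(r^2+r^{-2}+2\cos 2\theta)$ is an explicit proof of that fact, applied from the other side: you use the ellipse through $z$ rather than the ellipse through $|z|$. One small slip: ``$u$ is real, which is exactly the case $z\in\mathbb{R}$'' fails for $z\in(-1,1)$, where $u$ is unimodular and not real. You only use it when $|z|\ge 1$, where it holds, so nothing breaks.

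The genuine difference is in (iii). The paper stays in the variables $x_1,x_2$: it subtracts $x_1x_2$, squares, and expands until the difference reduces to $\rho(x_1)\rho(x_2)\sqrt{x_1^2-1}\sqrt{x_2^2-1}\ge 0$. Your substitution $x=\cosh a$ instead makes $\log\rho$ linear, since $\rho(\cosh a)=e^a$. Together with strict monotonicity of $\rho$ on $[1,\infty)$, this turns the claim directly into the addition formula $\cosh(a+b)=\cosh a\cosh b+\sinh a\sinh b$. Your version avoids the expand-and-square computation and makes the equality case transparent: the paper's factor $\sqrt{x_1^2-1}\sqrt{x_2^2-1}$ is exactly your $\sinh a\sinh b$. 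You also carry the equality characterization through the induction on $d$, which the paper leaves implicit when it reduces to the case $d=2$.
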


\begin{proof}
We note that the geometric meaning of $\rho(z)$ is the sum of the semi-major axis and the semi-minor axis of the Bernstein ellipse passing through the point $z$.
The first statement follows from the symmetry of the Bernstein ellipse about the real axis and imaginary axis. The second statement is trivial for $|z|\le1$, while for $|z|>1$, it follows from the fact that all points on the Bernstein ellipse $\partial E(\tilde{\rho})$ with $\tilde{\rho}=\rho(|z|)$ except $\pm |z|$ are located in the open disk $\{ t\in \mathbb{C}:|t|<|z|\}$.

For the third statement, it suffices to prove that $\rho(x_1 x_2) \le \rho(x_1)\rho(x_2)$ for $x_1, x_2\ge1$. A direct calculation gives
\begin{equation}
	\begin{split}
		&\rho(x_1)\rho(x_2)-\rho(x_1x_2)\ge 0 \\ \Leftrightarrow  &\left[x_1+(x_1^2-1)^{1/2}\right]\left[x_2+(x_2^2-1)^{1/2}\right]-\left[x_1x_2+(x_1^2x_2^2-1)^{1/2}\right] \ge 0 \\
		\Leftrightarrow
		&\left[x_1(x_2^2-1)^{1/2} + x_2(x_1^2-1)^{1/2} + (x_1^2-1)^{1/2}(x_2^2-1)^{1/2}\right]^2 \ge x_1^2x_2^2-1 \\ \Leftrightarrow
		&\left[x_1+(x_1^2-1)^{1/2}\right]\left[x_2+(x_2^2-1)^{1/2}\right] (x_1^2-1)^{1/2}(x_2^2-1)^{1/2} \ge 0.
		\notag
	\end{split}
\end{equation}
The last inequality clearly holds, which ends the proof.
\end{proof}

\begin{proof}[Proof of \rm{\eqref{Rone}}.]
We only prove the case for $f \in \mathcal{A}_1$,
as the proof for $f \in \mathcal{A}_2$ is similar. From the fact that $S(f) = \{ \mathbf{z} \in \mathbb{C}^d: \prod_{i=1}^{d}z_i^{r_i} = -c \}$, we have
\begin{equation}\label{mainA}
\prod_{i=1}^{d}|z_i|^{r_i} = |c|,\quad \forall(z_1,\dots,z_d)\in S(f).
\end{equation}
Without loss of generality, we assume that $r_d$ is odd.

We first derive the closed form of $R_{\Sigma}$. It follows from \eqref{mainA} that, for any $(z_1,\dots,z_d)\in S(f)$, there exists $z_j$ satisfying $|z_j| \ge |c|^{1/ \Vert \mathbf{r} \Vert_{\ell^1}}$. Hence
\begin{equation}
\max_{1\leq i\leq d} \{ \rho(z_i) \} \ge \rho (z_j) \ge \rho(|z_j|) \ge
\rho \left(|c|^{1/ \Vert \mathbf{r} \Vert_{\ell^1}} \right), \quad \forall(z_1,\dots,z_d)\in S(f), \notag
\end{equation}
where the second inequality follows from the item (ii) of Lemma \ref{lemmaA}. On the other hand, let
\[
y_1=\dots=y_{d-1}=|c|^{1/ \Vert \mathbf{r} \Vert_{\ell^1}}, \quad y_d=-\left(c/|c|\right)|c|^{1/ \Vert \mathbf{r} \Vert_{\ell^1}}.
\]
Then $(y_1,\dots,y_d)\in S(f)$ and $\max_{1\leq i\leq d} \{ \rho(y_i) \}=\rho(|c|^{1/ \Vert \mathbf{r} \Vert_{\ell^1}})$. Combining the above argument gives $R_\Sigma = \rho( |c|^{1/\Vert \mathbf{r} \Vert_{\ell^1}})$.

We next derive the closed form of $R_{\Sigma^*}$. Let $(z_1,\dots,z_d)\in S(f)$ and select components with the modulus greater than or equal to one, which we may assume to be $\{z_i\}_{i=1}^m \,(m\le d)$.
From the item (ii) and (iii) of Lemma \ref{lemmaA}, we know that
\begin{equation}\label{ri11}
\prod_{i=1}^d\rho(z_i) \ge \prod _{i=1}^d\rho(|z_i|) = \prod _{i=1}^m\rho(|z_i|)
\ge \rho \left(\prod_{i=1}^m|z_i|\right).
\end{equation}
Noting that $\prod _{i=1}^m|z_i|^{\Vert \mathbf{r} \Vert_{\ell^\infty}} \ge |c|$, which follows from (\ref{mainA}), we have
\begin{equation}\label{ri12}
\rho \left(\prod_{i=1}^m|z_i|\right) = \rho\left[ \left(\prod _{i=1}^m|z_i|^{\Vert \mathbf{r} \Vert_{\ell^\infty}}\right)^{1/{\Vert \mathbf{r} \Vert_{\ell^\infty}}}\right] \ge \rho\left(|c|^{1/{\Vert \mathbf{r} \Vert_{\ell^\infty}}}\right).
\end{equation}
Combining \eqref{ri11} and \eqref{ri12} gives $R_{\Sigma^{*}} \ge \rho( |c|^{1/\Vert \mathbf{r} \Vert_{\ell^\infty}})$.
On the other hand, selecting $r_j\in \{ r_i\}_{i=1}^{d}$ satisfying $r_j=\Vert \mathbf{r} \Vert_{\ell^\infty}$ (or either one if $r_j$ is not unique), we set
\begin{equation}
(y_1,\dots,y_d)=
\begin{cases}
y_d=-\left(c/|c|\right)|c|^{1/\Vert \mathbf{r} \Vert_{\ell^\infty}},\
y_i=1 \ (i\neq d), &r_j=r_d, \\
y_j=|c|^{1/\Vert \mathbf{r} \Vert_{\ell^\infty}},\ y_d=-c/|c|,\ y_i=1 \ (i\neq j,d), &r_j\neq r_d.
\end{cases} \notag
\end{equation}
Then $(y_1,\dots,y_d)\in S(f)$ and $\prod_{i=1}^d \rho(y_i)=\rho(|c|^{1/ \Vert \mathbf{r} \Vert_{\ell^\infty}})$. Combining the above argument gives $R_{\Sigma^{*}} = \rho(|c|^{1/\Vert \mathbf{r} \Vert_{\ell^\infty}})$.
\end{proof}

\begin{proof}[Proof of \rm{\eqref{Rspec}}.]
Let $x_j = \mathrm{Re}(z_j)$ and $y_j = \mathrm{Im}(z_j)$, $j = 1,2$. From the fact that $S(f) = \{ (z_1, z_2) \in \mathbb{C}^2: z_1 z_2 = \pm \mathrm{i} \sqrt{c} \}$, we have
\begin{equation}\label{mainK}
	\left| x_1 y_2 + x_2y_1 \right| = \sqrt{c},\quad \forall(z_1,z_2) \in S(f).
\end{equation}

We first derive the closed form of $R_{\Sigma}$.
Let $(z_1,z_2)\in S(f)$ and set $r := r(z_1,z_2)=\max\{\rho(z_1),\rho(z_2)\}$.
From the fact that $\rho(z_1), \rho(z_2) \le r$ and $\rho(z) \le R \Leftrightarrow z \in \overline{E(R)}$, we have
\[
\frac{x_j^2}{(r + r^{-1})^2}
+
\frac{y_j^2}{(r - r^{-1})^2}
\le \frac{1}{4},
\quad j=1,2.
\]
Using the above inequalities, the Cauchy--Schwarz inequality, and \eqref{mainK}, we obtain
\[
\begin{aligned}
	\frac{r^2 - r^{-2}}{4}
	&\ge \left( r^2 - r^{-2} \right) \left[ \frac{x_1^2}{(r+r^{-1})^2} + \frac{y_1^2}{(r-r^{-1})^2} \right]^{1/2}
	\left[ \frac{x_2^2}{(r+r^{-1})^2} + \frac{y_2^2}{(r-r^{-1})^2} \right]^{1/2}
	\\
	&\ge \left( r^2 - r^{-2} \right) \left| \frac{x_1}{r + r^{-1}} \frac{y_2}{r-r^{-1}} +  \frac{x_2}{r+r^{-1}} \frac{y_1}{r-r^{-1}} \right|
	= \left| x_1 y_2 + x_2 y_1 \right| = \sqrt{c}.
\end{aligned}
\]
Since $g(x) = x^2 - x^{-2}$ is strictly increasing on $[1, \infty)$, the above inequality implies that
\[
\max\{\rho(z_1),\rho(z_2)\} = r \ge \sqrt{\sqrt{1+ 4c} + 2\sqrt{c} } = \rho \left(\sqrt{s+1} \right), \quad \forall (z_1,z_2) \in S(f),
\]
where $s = (\sqrt{1+4c} - 1)/2$. The above lower bound can be attained, for example, at
$(\sqrt{s+1},\mathrm{i}\sqrt{s}) \in S(f)$. Combining the above argument gives $R_\Sigma = \rho (\sqrt{s+1} )$.

We next derive the closed form for $R_{\Sigma^*}$. Let $(z_1,z_2)\in S(f)$ and set $u_j := u_j(z_j) = \log \rho(z_j)$, $j = 1,2$.
From the fact that $z_j \in \partial E(e^{u_j})$, $j=1,2$, we have
\[
\frac{x_j^2}{\cosh^2u_j} + \frac{y_j^2}{\sinh^2u_j} = 1, \quad j=1,2.
\]
Let $m(u_1,u_2) = \max\{ \cosh u_1 \sinh u_2, \, \sinh u_1 \cosh u_2 \}$. Using the above equalities, the Cauchy--Schwarz inequality, and \eqref{mainK}, we obtain
\[
\begin{aligned}
	\sinh(u_1 + u_2)
	&= \cosh u_1 \sinh u_2 + \sinh u_1 \cosh u_2 \ge m(u_1,u_2) \\
	&= m(u_1,u_2) \left( \frac{x_1^2}{\cosh^2u_1} + \frac{y_1^2}{\sinh^2u_1} \right)^{1/2}
	\left( \frac{x_2^2}{\cosh^2u_2} + \frac{y_2^2}{\sinh^2u_2} \right)^{1/2} \\
	&\ge m(u_1,u_2) \left(\left| \frac{x_1}{\cosh u_1} \frac{y_2}{\sinh u_2}\right| + \left|\frac{x_2}{\cosh u_2} \frac{y_1}{\sinh u_1} \right| \right)
	\ge \left| x_1 y_2 + x_2 y_1 \right| = \sqrt{c}.
\end{aligned}
\]
Since $g(x) = \sinh(x)$ is strictly increasing on $[0, \infty)$, the above inequality implies that
\[
\rho(z_1)\rho(z_2) = e^{u_1+u_2} \ge e^{\operatorname{arsinh}\sqrt{c}} = \rho\left(\mathrm{i}\sqrt{c}\right), \quad \forall (z_1,z_2) \in S(f).
\]
The above lower bound can be attained, for example, at $(1, \mathrm{i}\sqrt{c}) \in S(f)$. Combining the above argument gives $R_{\Sigma^*} = \rho (\mathrm{i}\sqrt{c})$.
\end{proof}

\begin{proof}[Proof of \rm{\eqref{Rtwo}}.]
We only prove the case for $f \in \mathcal{B}_1$,
as the proof for $f \in \mathcal{B}_2$ is similar. From the fact that $S(f) = \{ \mathbf{z} \in \mathbb{C}^d: \sum_{i=1}^{d}z_i^{r} = -c \}$, we have
\begin{equation}\label{mainB}
\sum_{i=1}^{d}|z_i|^{r} \ge |c|,\quad \forall(z_1,\dots,z_d)\in S(f).
\end{equation}

We first derive the closed form of $R_{\Sigma}$. It follows from \eqref{mainB} that, for any $(z_1,\dots,z_d)\in S(f)$, there exists $z_j$ satisfying $|z_j| \ge \sqrt[r]{|c|/d}$. The remaining steps are similar to the part about $R_\Sigma$ in the proof of (\ref{Rone}).

We next derive the closed form of $R_{\Sigma^*}$. Let $(z_1,\dots,z_d)\in S(f)$ and select components with the modulus greater than or equal to one, which we may assume to be $\{z_i\}_{i=1}^m \,(m\le d)$. Following similar steps to the part about $R_{\Sigma^{*}}$ in the proof of (\ref{Rone}), we have
\begin{equation}
\prod _{i=1}^d\rho(z_i)
\ge \rho \left(\prod_{i=1}^m|z_i|\right)
= \rho\left[ \left(\prod _{i=1}^m|z_i|^{r}\right)^{1/r}\right]
\ge \rho\left[ \left(\sum _{i=1}^m|z_i|^{r} -(m-1)\right)^{1/r}\right], \notag
\end{equation}
where we have used the inequality $\prod_{i=1}^m \alpha_i \ge \sum_{i=1}^{m} \alpha_i -(m-1)$ for $\alpha_i\ge1$ at the second inequality. Noting that $\sum _{i=1}^m|z_i|^{r}+d-m \ge |c|$, which follows from (\ref{mainB}), we obtain
\begin{equation}
\rho\left[ \left(\sum _{i=1}^m|z_i|^{r} -(m-1)\right)^{1/r}\right] \ge
\rho\left(\! \sqrt[r]{|c|-(d-1)}\right). \notag
\end{equation}
To show that the above lower bound can be attained, we set
\begin{equation}
(y_1,\dots,y_d)=
\begin{cases}
y_d=-\sqrt[r]{c-(d-1)},\ y_i=-1\ (i\neq d), & c>d, \\
y_d=\sqrt[r]{-c-(d-1)},\ y_i=1\ (i\neq d), & c<-d.
\end{cases} \notag
\end{equation}
Then $(y_1,\dots,y_d)\in S(f)$ and $\prod_{i=1}^d \rho(y_i)=\rho\big(\!\sqrt[r]{|c|-(d-1)}\big)$. Combining the above argument gives $R_{\Sigma^{*}} = \rho\big(\!\sqrt[r]{|c|-(d-1)}\big)$.
\end{proof}

\section*{Acknowledgement}
This work was partially supported by the National Natural Science Foundation of China under grant number 12371367 and the Hubei Provincial Natural Science Foundation of China under grant number 2023AFA083 and the fundamental research funds for the central universities under grant number 2025BRSXA003.

\end{document}